\newcommand{\defeq}{:=}
\DeclareMathOperator*{\argmin}{arg\,min}
\DeclareMathOperator*{\supp}{supp}
\newcommand{\norm}[1]{\|#1\|}
\newcommand{\abs}[1]{|#1|}
\newcommand{\grad}[1]{\bm{\nabla} #1}
\DeclareMathOperator{\BV}{BV}
\DeclareMathOperator{\BD}{BD}
\DeclareMathOperator{\TV}{TV}
\DeclareMathOperator{\TGV}{TGV}
\DeclareMathOperator{\TVL}{TVL}
\DeclareMathOperator{\TVLp}{\TVL^p}
\newcommand{\TVpwL}{\TV_\text{pwL}}
\let\div\relax
\DeclareMathOperator{\div}{\divergence}
\DeclareMathOperator{\dist}{dist}
\DeclareMathOperator{\PSNR}{PSNR}
\DeclareMathOperator{\SSIM}{SSIM}
\newcommand{\weakto}{\mathrel{\rightharpoonup}}
\newcommand{\R}{\mathbb R}
\newcommand{\charf}{\chi}
\newcommand{\Mg}{\mathfrak{M}}
\newcommand{\M}{\mathcal{M}}
\renewcommand{\leq}{\leqslant}
\renewcommand{\geq}{\geqslant}
\renewcommand{\phi}{\varphi}
\newcommand{\reg}{\mathcal J}
\newcommand{\C}{\mathcal{C}}
\newtheorem{theorem}{Theorem}
\newtheorem{corollary}[theorem]{Corollary}
\newtheorem{lemma}[theorem]{Lemma}
\newtheorem{proposition}[theorem]{Proposition}
\theoremstyle{definition}
\newtheorem{definition}[theorem]{Definition}
\newtheorem*{assumption*}{Assumption}
\newtheorem{remark}[theorem]{Remark}
\newtheorem*{remark*}{Remark}
\newtheorem*{definition*}{Definition}
\providecommand{\keywords}[1]{\textbf{Keywords: } #1}
\providecommand{\ams}[1]{\textbf{AMS subject classifications: } #1}
\algnewcommand\Input{\textbf{Input: }}
\algnewcommand\Package{\textbf{Package: }}
\algnewcommand\Parameters{\textbf{Parameters: }}
\algnewcommand\Output{\textbf{Output: }}
\algnewcommand{\IfThenElse}[3]{
  \algorithmicif\ #1\ \algorithmicthen\ #2\ \algorithmicelse\ #3}
 \algnewcommand{\IfThen}[2]{
  \algorithmicif\ #1\ \algorithmicthen\ #2}
\title{Total Variation Regularisation with Spatially Variable Lipschitz Constraints}
\author{Martin Burger\footnote{Department Mathematik, Universit\"{a}t Erlangen-N\"{u}rnberg, Cauerstrasse 11, 91058 Erlangen, Germany. \texttt{martin.burger@fau.de}} \and Yury Korolev\footnote{Department of Applied Mathematics and Theoretical Physics, University of Cambridge, Wilberforce Road, Cambridge CB3 0WA, UK. \texttt{\{yk362;sp751;cbs31\}@cam.ac.uk}} \and Simone Parisotto\footnotemark[2] \and Carola-Bibiane Sch\"onlieb\footnotemark[2]}
\date{}
\newcommand{\RR}{\mathbb{R}}
\newcommand{\xbold}{\bm{x}}
  \let\argmin\relax
  \let\dist\relax
  \let\div\relax
  \let\prox\relax
  \let\supp\relax
  \DeclareMathOperator*{\argmin}{arg\,min}
  \DeclareMathOperator{\dist}{dist}
  \DeclareMathOperator{\div}{div}
  \DeclareMathOperator{\prox}{\mathbf{prox}}
  \DeclareMathOperator{\supp}{supp}
\newcommand{\blank}{\,{\cdot}\,}
\newcommand{\pbold}{{\bm{p}}}
\newcommand{\ybold}{{\bm{y}}}
\newcommand{\zbold}{{\bm{z}}}
\newcommand{\Kcal}{\mathcal{K}}
\newcommand{\diff}{\mathop{}\mathrm{d}}
\begin{document}

\maketitle
\begin{abstract}
    We introduce a first order Total Variation type regulariser that decomposes a function into a part with a given Lipschitz constant (which is also allowed to vary spatially) and a jump part. The kernel of this regulariser contains all functions whose Lipschitz constant does not exceed a given value, hence by locally adjusting this value one can determine how much variation is the reconstruction allowed to have. We prove regularising properties of this functional, study its connections to other Total Variation type regularisers and propose a primal dual optimisation scheme. Our numerical experiments demonstrate that the proposed first order regulariser can achieve reconstruction quality similar to that of second order regularisers such as Total Generalised Variation, while requiring significantly less computational time.
\end{abstract}

\keywords{inverse problems, edge preserving regularisation, total variation, total generalised variation, infimal convolution, primal-dual algorithm}

\ams{65J20, 65J22, 68U10, 94A08}

\section{Introduction}

Edge preserving regularisation plays a crucial role in imaging applications, in particular in image reconstruction~\cite{Chambolle10anintroduction}. Total Variation ($\TV$)~\cite{ROF} is perhaps the most popular edge preserving regularisers since it combines the ability to preserve discontinuities in the reconstructions while allowing for rather efficient computations~\cite{chambolle_pock_acta_numerica:2016}.

A drawback of Total Variation is the so-called \emph{staircasing}~\cite{Ring:2000,Jalalzai:2016}, i.e. the piecewise constant nature of the reconstructions with discontinuities that are not present in the ground truth. To overcome these issues, several regularisers that use second and higher order information (i.e. higher order derivatives) have been introduced. The most successful of them is arguably the Total Generalised Variation ($\TGV$)~\cite{bredies2009tgv}. 

In contrast to Total Variation, which favours piecewise constraint reconstruction, the reconstructions obtained with $\TGV$ are piecewise polynomial; in the most popular case of $\TGV^2$ they are piecewise affine. 

However, $\TGV$ also has some known drawbacks. First, it lacks the \emph{maximum principle}, i.e. the maximum value of the reconstruction can exceed the maximum value of the original function (this statement will be made more precise in Section~\ref{sec:max_pr}). From the numerical point of view, $\TGV$ is typically significantly more expensive than first order methods such as Total Variation.

Therefore, there is an interest in obtaining performance similar to that of $\TGV$ with a first order regulariser, i.e. using only derivatives of the first order. Such approaches use infimal convolution type regularisers~\cite{Burger_TVLp_2016, Burger_TVLp_pt2_2016}, where the Radon norm used in Total Variation is convolved with an $L^p$ norm, $p>1$.

In this work we introduce another infimal convolution type regulariser that is not based on $L^p$ norms, but rather on order intervals in the space of (scalar valued) Radon measures. This allows us to decompose a function into a Lipschitz part and a jump part and to spatially adjust the Lipschitz constant of the Lipschitz part.

We start with the following motivation. Let $\Omega \subset \R^d$ be a bounded Lipschitz domain and $f \in L^2(\Omega)$ a noisy image. Recall the ROF~\cite{ROF} denoising model
\begin{eqnarray}\nonumber
\min_{u \in \BV(\Omega)} \frac{1}{2}\norm{u-f}^2_{L^2(\Omega)}+\alpha \norm{Du}_{\Mg},
\end{eqnarray}
\noindent where $D \colon L^1(\Omega) \to \Mg(\Omega;\R^d)$ is the weak gradient, $\Mg(\Omega;\R^d)$ is the space of vector-valued Radon measures and $\alpha > 0$ is the regularization parameter. 
Introducing an auxiliary variable $g\in\Mg(\Omega;\R^d)$, we can rewrite this problem as follows \looseness=-1
\begin{eqnarray}
\min_{\substack{u \in \BV(\Omega)\\g\in\Mg(\Omega;\R^d)}}~\frac{1}{2}\norm{u-f}^2_{L^2(\Omega)}+\alpha \norm{g}_{\Mg}\qquad s.t.~Du=g.\nonumber
\end{eqnarray}
Our idea is to relax the constraint on $Du$ as follows
\begin{eqnarray}
\min_{\substack{u \in \BV(\Omega)\\g\in\Mg(\Omega;\R^d)}}~\frac{1}{2}\norm{u-f}^2_{L^2(\Omega)}+\alpha \norm{g}_{\Mg}\qquad s.t.~|Du-g|\leq\gamma\nonumber
\end{eqnarray}
\noindent for some positive constant, function or measure $\gamma$. Here $|Du - g|$ is the variation measure corresponding to $Du-g$ and the symbol $"\leq"$ denotes a partial order in the space of signed (scalar valued) measures $\M(\Omega)$. This problem is equivalent to

\begin{eqnarray}\label{eq:tvpwl_inspo}
\min_{\substack{u \in \BV(\Omega)\\g\in\Mg(\Omega;\R^d)}}~\frac{1}{2}\norm{u-f}^2_{L^2(\Omega)}+\alpha \norm{Du-g}_{\Mg}\quad s.t.~|g|\leq\gamma,
\end{eqnarray}
which we take as the starting point of our approach.

The analysis in this paper assumes that the parameter $\gamma \in \M(\omega)$ is given a priori and reflects some knowledge about the solution that we are reconstructing. In our numerical experiments (Section~\ref{sec:num_exp}) we propose a simple procedure for estimating $\gamma$ from the noisy image in the context of denoising, however, this is not the main purpose of the paper. Future work may involve better approaches to estimating $\gamma$ from the data, including learning based approaches.

We also emphasise that the regulariser has the same topolgical properties as Total Variation and hence can be used in general regularisation (and not just denoising) in the same scenarios as Total Variation. 

The paper is organised as follows. In Section~\ref{sec:def} we give three equivalent definitions of the proposed regulariser and study its properties. In Section~\ref{sec:num_impl} we introduce a primal-dual scheme that can be used to solve problem~\eqref{eq:tvpwl_inspo}. Section~\ref{sec:num_exp} contains numerical experiments comparing the performance of $\TV$, $\TGV$ and the proposed regulariser $\TVpwL$.

This paper extends the results of the conference paper~\cite{TVwpL_SSVM}, however, most results presented here are new. The only overlap is Definition~\ref{def:tvpwl_primal} (definition of $\TVpwL$), Theorem~\ref{thm:tvpwl_dual} (dual formulation of $\TVpwL$) and Theorem~\ref{th:tvpwl_topequiv} (topological equivalence to Total Variation). The numerical implementation as a primal-dual scheme and  numerical experiments are also new.

\section{Definition and Properties}\label{sec:def}
In this section we formally define the regularisation functional in~\eqref{eq:tvpwl_inspo}, to which we refer as $\TVpwL^\gamma$. The subscript ``$pwL$'' stands for ``piecewise Lipschitz'' and reflects the fact that, as we shall see, the regulariser promotes reconstructions that are piecewise Lipschitz with (spatially varying) Lipschitz constant $\gamma$. 

  Before we proceed with a formal definition, let us clarify how we understand the inequality sign in~\eqref{eq:tvpwl_inspo}.  Let $\M(\Omega)$ denote the space of all scalar valued finite Radon measures on $\Omega$.
  \begin{definition}\label{def:tvpwl_ordvecsp}
We call a  measure $\mu \in \M(\Omega)$ positive if for every subset $E\subseteq \Omega$ one has $\mu(E)\geq 0$. For two signed measures $\mu_1,\mu_2 \in \M(\Omega)$ we say that $\mu_1\leq\mu_2$ if $\mu_2-\mu_1$ is a positive measure.
\end{definition}
For every $\mu \in \M(\Omega)$, the Hahn decomposition of measures~\cite{DS1} defines two positive measures $\mu_+$ and $\mu_-$ such that
\begin{equation*}
    \mu = \mu_+ - \mu_-
\end{equation*}
and 
\begin{equation*}
    \abs{\mu} = \mu_+ + \mu_-,
\end{equation*}
where $\abs{\mu}$ is the total variation of $\mu$. 


\subsection{Three Equivalent Definitions of $\TVpwL$}
In this section we provide three equivalent definitions of $\TVpwL$. We start with the primal formulation.
\begin{definition}\label{def:tvpwl_primal}
Let $\Omega\subset\R^d$ be a bounded Lipschitz domain, $\gamma\in\M(\Omega)$ be a finite positive measure. For any $u\in L^1(\Omega)$ we define
\begin{eqnarray*}
\TVpwL^\gamma(u):=\min_{g\in\Mg(\Omega;\R^d)}\norm{Du-g}_{\Mg} 
\quad \text{s.t. $|g|\leq\gamma$},
\end{eqnarray*}
where $||\cdot||_{\Mg}$ denotes the Radon norm and $|g|$ is the variation  measure~\cite{Bredies_Lorenz} corresponding to $g$, i.e. for any subset $E \subset \Omega$
\begin{equation*}
|g|(E) \defeq \sup\left\{\sum_{i=1}^\infty \norm{g(E_i)}_2 \mid E = \bigcup_{i \in \mathbb N} E_i, \,\, \text{$E_i$ pairwise disjoint}  \right\}
\end{equation*}
\noindent  (see also the polar decomposition of measures~\cite{Ambrosio}).
\end{definition}

 The use of $\min$ instead of $\inf$ in Definition~\ref{def:tvpwl_primal} is justified, since it is a metric projection onto a closed convex set $\{g \colon \abs{g} \leq \gamma\} \subset \Mg(\Omega;\R^d)$. For $\gamma = 0$, we recover Total Variation, i.e. 
\begin{equation}
    \TVpwL^0 \equiv \TV.
\end{equation}
 
 We can equivalently rewrite Definition~\ref{def:tvpwl_primal} using an infimal convolution
 \begin{equation}
     \TVpwL^\gamma = (\norm{\cdot}_\Mg \square \charf_{\abs{\cdot} \leq \gamma})(Du).
 \end{equation}
 
 It is evident that $\TVpwL$ is lower-semicontinuous and convex.
 
 As with Total Variation, there exists an equivalent dual formulation of $\TVpwL$. The proof of the next result can be found in~\cite{TVwpL_SSVM}, but we include it here for the sake of completeness.
\begin{theorem}\label{thm:tvpwl_dual}
Let $\gamma\in \M(\Omega)$ be a positive finite measure and $\Omega$ a bounded Lipschitz domain. Then for any $u\in L^1(\Omega)$ the $\TVpwL^{\gamma}$ functional can be equivalently expressed as follows
\[
\TVpwL^{\gamma}(u)~=~\sup_{\substack{\phi\in \C_0^{\infty}(\Omega;\R^d) \\ |\phi| \leq 1}}~\left\{ \int_{\Omega} u~\div~\phi~dx-\int_{\Omega} |\phi| d\gamma \right\},
\]
\noindent where $|\phi|$ denotes the pointwise $2$-norm of $\phi$.
\end{theorem}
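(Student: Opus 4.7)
The plan is to rewrite the primal problem as a saddle-point problem and use a standard minimax theorem to exchange the order of optimisation. Throughout, the case $u\notin\BV(\Omega)$ is trivial: both sides equal $+\infty$ (because $Du-g$ fails to be a finite measure for any $g$ with $|g|\leq\gamma$, and the TV-part of the dual supremum already blows up), so I focus on $u\in\BV(\Omega)$.

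First, I would expand the Radon norm inside the primal definition using its well-known dual characterisation: for any $\nu\in\Mg(\Omega;\R^d)$,
\[
\|\nu\|_{\Mg}=\sup_{\phi\in C_0(\Omega;\R^d),\,|\phi|\leq 1}\int_\Omega \phi\cdot d\nu.
\]
Substituting $\nu=Du-g$ in Definition~\ref{def:tvpwl_primal} yields
\[
\TVpwL^\gamma(u)=\inf_{|g|\leq\gamma}\,\sup_{|\phi|\leq 1}\left[\int_\Omega\phi\cdot dDu-\int_\Omega\phi\cdot dg\right].
\]

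Next, I would apply Sion's minimax theorem to swap the $\inf$ and $\sup$. The functional is bilinear, hence convex in $g$ and concave in $\phi$; the constraint set $K=\{g\in\Mg(\Omega;\R^d):|g|\leq\gamma\}$ is convex and weak-$*$ compact (it is norm-bounded by $\|\gamma\|_{\M}$ and weak-$*$ closed); and for each $\phi\in C_0$ the map $g\mapsto\int\phi\cdot dg$ is weak-$*$ continuous on $\Mg$. These hypotheses are sufficient for the minimax swap. After swapping, I evaluate the inner infimum: for fixed $\phi$ with $|\phi|\leq 1$, using the polar decomposition $g=\sigma_g\,|g|$ with $|\sigma_g|=1$, $|g|$-a.e.,
\[
\inf_{|g|\leq\gamma}\left(-\int\phi\cdot dg\right)=-\sup_{|g|\leq\gamma}\int\phi\cdot\sigma_g\,d|g|=-\int_\Omega|\phi|\,d\gamma,
\]
with equality attained by the choice $\sigma_g=\phi/|\phi|$ on $\{\phi\neq 0\}$ and $|g|=\gamma$.

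Finally, I would restrict the outer supremum from $C_0$ to $C_0^\infty$ by a density argument: $C_0^\infty(\Omega;\R^d)$ is sup-norm dense in $C_0(\Omega;\R^d)$, and both functionals $\phi\mapsto\int\phi\cdot dDu$ (bounded by $\|Du\|_{\Mg}\|\phi\|_\infty$ since $u\in\BV$) and $\phi\mapsto\int|\phi|\,d\gamma$ are sup-norm continuous; mollification also preserves the constraint $|\phi|\leq 1$. On $C_0^\infty$, the distributional identity $\int\phi\cdot dDu=-\int u\,\div\phi\,dx$ holds, and a final change of variable $\phi\mapsto -\phi$ (which preserves both $|\phi|\leq 1$ and $|\phi|$) produces the displayed formula. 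The principal obstacle is the minimax swap: one must be careful that the appropriate topological setting provides both the weak-$*$ compactness of $K$ and the continuity of the bilinear pairing on $K$; everything else is then a routine unfolding of duality and integration by parts.
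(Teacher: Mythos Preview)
Your proposal is correct and follows essentially the same route as the paper's proof: dualise the Radon norm, swap $\inf$ and $\sup$ via a minimax theorem (you invoke Sion using weak-$*$ compactness of $\{|g|\leq\gamma\}$, the paper cites the Nonsymmetrical Minimax Theorem of Borwein--Zhu), evaluate the inner optimisation to produce $\int|\phi|\,d\gamma$, pass to $C_0^\infty$ by density, and finish with integration by parts and $\phi\mapsto-\phi$. Your treatment is slightly more detailed (explicit handling of $u\notin\BV$, the polar-decomposition computation, and the density argument), but the structure and all key ideas coincide.
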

\begin{proof}
Since by the Riesz-Markov-Kakutani representation theorem the space of vector valued Radon measures $\Mg(\Omega;\R^d)$ is the dual of the space $\C_0(\Omega;\R^d)$, we rewrite the expression in Definition~\ref{def:tvpwl_primal} as follows
\[
\TVpwL^{\gamma}(u)=\inf_{\substack{g\in\Mg(\Omega;\R^d)\\|g|\leq\gamma}}\norm{Du-g}_{\Mg} 
=\inf_{\substack{g\in\Mg(\Omega;\R^d)\\|g|\leq\gamma }}\sup_{\substack{\phi\in\C_0(\Omega;\R^d)\\|\phi| \leq 1}}(Du-g,\phi).
\]

In order to exchange $\inf$ and $\sup$, we need to apply a minimax theorem. In our setting  we can use the Nonsymmetrical Minimax Theorem from \cite[Th. 3.6.4]{Borwein_Zhu}.
Since the set $\{g~|~|g|\leq\gamma\}\subset\Mg(\Omega;\R^d)=(\C_0(\Omega;\R^d))^*$ is bounded,  
convex and closed 
and the set
$\{\phi~|~\norm{\phi}_{2,\infty} \leq 1\}\subset\C_0(\Omega;\R^d)$ is convex, we can swap the infimum and the supremum and obtain the following representation
\[
\begin{aligned}
\TVpwL^{\gamma}(u) &= \sup\limits_{\substack{\phi\in\C_0(\Omega;\R^d)\\|\phi| \leq 1}}\inf\limits_{\substack{g\in\Mg(\Omega;\R^d)\\|g|\leq\gamma }}( Du-g,\phi) \\
 & = \sup\limits_{\substack{\phi\in\C_0(\Omega;\R^d)\\|\phi| \leq 1}}[( Du,\phi)-\sup\limits_{\substack{g\in\Mg(\Omega;\R^d)\\|g|\leq\gamma }}(g,\phi)] 
  = \sup\limits_{\substack{\phi\in\C_0(\Omega;\R^d)\\|\phi| \leq 1}} [(Du,\phi)-(\gamma ,|\phi|)].
\end{aligned}
\]
Noting that the supremum can actually be taken over $\phi\in\C_0^\infty(\Omega;\R^d)$, we obtain
\[
 \TVpwL^{\gamma}(u) = \sup\limits_{\substack{\phi\in\C_0^\infty(\Omega;\R^d)\\|\phi| \leq 1}}[(u, -\div \phi) - (\gamma,|\phi|)]
\]
\noindent which yields the assertion upon replacing $\phi$ with $-\phi$.
\end{proof}
\begin{corollary}\label{cor:jointly_lsc}
It is evident from the dual formulation that $\TVpwL$ is jointly lower-semicontinuous in $u$ and $\gamma$. More precisely, let $u_n \to u$ in $L^1(\Omega)$ and $\gamma_n \weakto^* \gamma$ weakly-$*$ in $\M(\Omega)$, i.e.
\begin{equation*}
    \int_\Omega \phi \, d\gamma_n \to \int_\Omega \phi \, d\gamma
\end{equation*}
for all $\phi \in \C_0(\Omega)$. Then
\begin{eqnarray*}
    \TVpwL^\gamma(u) && = \sup_{\substack{\phi\in \C_0^{\infty}(\Omega;\R^d) \\ |\phi| \leq 1}}~\left\{ \int_{\Omega} u~\div~\phi~dx-\int_{\Omega} |\phi| d\gamma \right\} \\
    && = \sup_{\substack{\phi\in \C_0^{\infty}(\Omega;\R^d) \\ |\phi| \leq 1}} \lim_{n \to \infty} \left\{ \int_{\Omega} u_n~\div~\phi~dx-\int_{\Omega} |\phi| d\gamma_n  \right\} \\
    && \leq \liminf_{n \to \infty} \sup_{\substack{\phi\in \C_0^{\infty}(\Omega;\R^d) \\ |\phi| \leq 1}} \left\{ \int_{\Omega} u_n~\div~\phi~dx-\int_{\Omega} |\phi| d\gamma_n  \right\} \\
    && = \liminf_{n \to \infty} \TVpwL^{\gamma_n}(u_n). 
\end{eqnarray*}
\end{corollary}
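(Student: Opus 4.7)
The strategy would be to invoke the dual formulation of $\TVpwL^\gamma$ established in Theorem~\ref{thm:tvpwl_dual}, which writes the functional as a supremum
\[
\TVpwL^\gamma(u) = \sup_{\substack{\phi \in \C_0^\infty(\Omega;\R^d) \\ |\phi| \leq 1}} F_{u,\gamma}(\phi),
\qquad F_{u,\gamma}(\phi) := \int_\Omega u \,\div \phi \, dx - \int_\Omega |\phi|\, d\gamma.
\]
I would first show that for every admissible $\phi$ the map $(u,\gamma) \mapsto F_{u,\gamma}(\phi)$ is continuous with respect to the product of the strong $L^1$-topology and the weak-$*$ topology on $L^1(\Omega) \times \M(\Omega)$, and then obtain the joint lower semicontinuity from the standard fact that a supremum of continuous functions is lower semicontinuous.

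Concretely, I would fix an admissible $\phi$ and handle the two integrals separately. Since $\div \phi$ is smooth with compact support in $\Omega$ and therefore belongs to $L^\infty(\Omega)$, the assumption $u_n \to u$ in $L^1(\Omega)$ immediately yields
$\int_\Omega u_n \,\div\phi \, dx \to \int_\Omega u \,\div\phi \, dx$. For the second integral I would observe that $|\phi|$ is a continuous function with compact support in $\Omega$, hence $|\phi| \in \C_0(\Omega)$; plugging $|\phi|$ directly into the definition of $\gamma_n \weakto^* \gamma$ stated in the corollary gives $\int_\Omega |\phi|\, d\gamma_n \to \int_\Omega |\phi|\, d\gamma$. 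Combining the two limits, $F_{u_n,\gamma_n}(\phi) \to F_{u,\gamma}(\phi)$ for every admissible $\phi$.

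The lower semicontinuity then follows in the usual way: from the trivial inequality $F_{u_n,\gamma_n}(\phi) \leq \TVpwL^{\gamma_n}(u_n)$ and the limit just established, I conclude that $F_{u,\gamma}(\phi) \leq \liminf_n \TVpwL^{\gamma_n}(u_n)$ for every admissible $\phi$, and taking the supremum over $\phi$ on the left-hand side recovers $\TVpwL^\gamma(u)$ through Theorem~\ref{thm:tvpwl_dual}. There is no serious obstacle here; the only point requiring even mild attention is to verify that the admissible test functions have compact support in $\Omega$, so that $|\phi|$ is a legitimate element of $\C_0(\Omega)$ against which the weak-$*$ convergence of the $\gamma_n$ may be tested. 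Everything else is a direct consequence of the duality formula and the fact that smoothness plus compact support is sufficient to upgrade $L^1$ and weak-$*$ convergence to convergence of the relevant pairings.
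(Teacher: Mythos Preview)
Your proposal is correct and follows exactly the same route as the paper: the corollary's proof is embedded in its statement as the displayed chain of equalities and inequalities, which relies on the dual formulation of Theorem~\ref{thm:tvpwl_dual} and the standard fact that a supremum of functions continuous in $(u,\gamma)$ is lower semicontinuous. You have simply made explicit the two verifications the paper leaves implicit, namely that $\div\phi \in L^\infty(\Omega)$ handles the $L^1$-convergence of $u_n$ and $|\phi| \in \C_0(\Omega)$ handles the weak-$*$ convergence of $\gamma_n$.
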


The following result provides an alternative definition of $\TVpwL$, which clarifies what kind of features are penalised by $\TVpwL$.

\begin{theorem}\label{thm:tvpwl_third}
Let $\Omega$ be a bounded Lipschitz domain and $\gamma\in\M(\Omega)$ be a finite positive measure. Then for any $u\in L^1(\Omega)$ the functional $TV_{pwL}^{\gamma}$ can be equivalently expressed as follows
\begin{eqnarray*}
\TVpwL^{\gamma}(u)&\ = &\norm{(|Du|-\gamma)_+}_{\M},
\end{eqnarray*}
where $(\cdot)_+$ denotes the positive part of a measure in the sense of Hahn decomposition.
\end{theorem}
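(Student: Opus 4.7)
The plan is to prove $\TVpwL^\gamma(u) = \|(|Du|-\gamma)_+\|_\M$ as two matching inequalities, working directly from the primal formulation of Definition~\ref{def:tvpwl_primal} and simultaneously exhibiting an explicit minimiser. For the lower bound, fix any admissible $g$ with $|g|\leq\gamma$ and apply the variation-measure triangle inequality:
\begin{equation*}
    |Du| \;=\; |(Du-g)+g| \;\leq\; |Du-g| + |g| \;\leq\; |Du-g| + \gamma,
\end{equation*}
so the signed measure $|Du|-\gamma$ is dominated by the positive measure $|Du-g|$. A short Hahn-decomposition argument (for any signed $\mu$ and positive $\nu$ with $\mu \leq \nu$, restricting to the positive Hahn set $P$ of $\mu$ gives $\mu_+(E) = \mu(E\cap P) \leq \nu(E\cap P) \leq \nu(E)$) then yields $(|Du|-\gamma)_+ \leq |Du-g|$. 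Taking total mass yields $\|(|Du|-\gamma)_+\|_\M \leq \|Du-g\|_\Mg$, and infimising over $g$ proves one direction.

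For the upper bound, I would build an explicit optimiser using densities relative to the dominating measure $\lambda := |Du| + \gamma$. Set $a := d|Du|/d\lambda$, $b := d\gamma/d\lambda$ and choose, via the polar decomposition, a Borel vector field $\tau$ with $|\tau|=1$ such that $Du = \tau \, d|Du| = \tau a \, d\lambda$, extending $\tau$ arbitrarily and measurably on $\{a=0\}$. Define $g^* := \tau\min(a,b)\,d\lambda$. Then $|g^*| = \min(a,b)\,d\lambda \leq b\,d\lambda = \gamma$, so $g^*$ is admissible, and
\begin{equation*}
    Du - g^* \;=\; \tau\bigl(a - \min(a,b)\bigr)\,d\lambda \;=\; \tau(a-b)_+\,d\lambda, \qquad |Du-g^*| \;=\; (a-b)_+\,d\lambda.
\end{equation*}
Since $a-b$ is the Radon-Nikodym density of $|Du|-\gamma$ with respect to $\lambda$, uniqueness of the Hahn decomposition identifies $(a-b)_+\,d\lambda$ with $(|Du|-\gamma)_+$, so $\|Du-g^*\|_\Mg = \|(|Du|-\gamma)_+\|_\M$ and $g^*$ realises the infimum in Definition~\ref{def:tvpwl_primal}.

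The main obstacle is purely measure-theoretic bookkeeping: ensuring that the polar decomposition field $\tau$ is extended so that $g^*$ is a well-defined Borel vector measure, and verifying that $(a-b)_+\,d\lambda$ really is the Hahn positive part of $|Du|-\gamma$ rather than merely a positive measure with the same total mass. Both points follow from standard Radon-Nikodym and Hahn uniqueness statements, but these verifications are precisely what turns the two masses-equality into the full measure-theoretic equality needed to pair with the lower bound.
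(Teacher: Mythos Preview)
Your proof is correct, and the lower bound matches the paper's almost exactly (triangle inequality for variation measures followed by a Hahn-decomposition restriction). The upper bound, however, is genuinely different from the paper's. The paper does not construct an explicit minimiser at all: instead it approximates $u$ by smooth $u_n$ in the strict $\BV$ topology and $\gamma$ by $L^1$ functions $\gamma_n$ weakly-$*$, computes $\TVpwL^{\gamma_n}(u_n) = \|(|Du_n|-\gamma_n)_+\|_\M$ pointwise for smooth data, and then invokes the joint lower semicontinuity of $\TVpwL$ in $(u,\gamma)$ (established earlier from the dual formulation) to pass to the limit. Your Radon--Nikodym construction with $\lambda = |Du|+\gamma$, densities $a,b$ and the polar field $\tau$ is more direct: it produces the optimal $g^\ast = \tau\min(a,b)\,d\lambda$ in one stroke, avoids any approximation, and does not rely on the dual formulation or Corollary~\ref{cor:jointly_lsc}. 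The paper's route has the side benefit of exhibiting continuity of $\TVpwL$ along strict $\BV$ sequences, while yours has the side benefit of identifying the minimiser explicitly; your argument is also self-contained and arguably cleaner as a proof of the theorem itself. The bookkeeping points you flag (extending $\tau$ measurably on $\{a=0\}$, and identifying $(a-b)_+\,d\lambda$ with the Hahn positive part of $|Du|-\gamma$) are routine: the first is harmless because $\min(a,b)=0$ there, and the second is the standard fact that for $\mu = f\,d\lambda$ with $\lambda\geq 0$ the Hahn sets are $\{f>0\}$ and $\{f\leq 0\}$ up to $\lambda$-null sets.
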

\begin{proof}[Proof]
The $\TVpwL$ functional is given by
\begin{equation*}
    \TVpwL^{\gamma}(u) = \min_{\substack{g\in\Mg(\Omega;\R^d) \\ |g|\leq\gamma}} \int_\Omega d\abs{Du-g}.
\end{equation*}
Using the Hahn decomposition~\cite{DS1} we can decompose $\Omega$ into two disjoint subsets where $ |Du|-\gamma \geq 0$ and  $|Du|-\gamma \leq 0$, respectively. Thus, we can split the integral over $\Omega$ as follows
\begin{eqnarray}
\TVpwL^{\gamma}(u)&=&\min_{\substack{g\in\Mg(\Omega;\R^d)\\|g|\leq\gamma }}\int_{|Du|\leq\gamma } d|Du-g| + \int_{|Du|\geq\gamma } d|Du-g|.\nonumber
\end{eqnarray}
Since the two subsets are disjoint, we can optimise over them separately. On $\{u \colon \abs{Du} \leq \gamma\}$ $g =\abs{Du}$ is feasible, hence the first integral vanishes. To estimate the second integral, we observe that for any $A \subset \Omega$
\begin{eqnarray}
|Du|(A) \leq |Du-g|(A) + |g|(A),\nonumber
\end{eqnarray}
hence
\begin{eqnarray*}\label{eq:tvpwl_du-g}
|Du-g|(\{u \colon \abs{Du} \geq \gamma\}) &&\geq |Du| (\{u \colon \abs{Du} \geq \gamma\}) - |g|(\{u \colon \abs{Du} \geq \gamma\}) \\
&& \geq |Du|(\{u \colon \abs{Du} \geq \gamma\}) - \gamma(\{u \colon \abs{Du} \geq \gamma\})
\end{eqnarray*}
and
\begin{eqnarray*}
\TVpwL^{\gamma}(u)&\geq&\int_{|Du|\geq\gamma }(|Du|-\gamma) = \int_{\Omega}(|Du|-\gamma)_+ = \norm{(|Du|-\gamma)_+}_\M.
\end{eqnarray*}
For the converse inequality, consider a sequence $u_n \in C_0^\infty(\Omega)$ such that
\begin{equation*}
    u_n \to u \quad \text{in $L^1(\Omega)$} \quad \text{and} \quad Du_n \to Du \quad \text{in $\Mg(\Omega)$},
\end{equation*}
i.e. $u_n \to u$ in the sense of strict convergence~\cite{Ambrosio}. Consider also a sequence $\gamma_n \in L^1(\Omega)$ such that
\begin{equation*}
    \int_\Omega \phi(x) \, \gamma_n(x) \, dx \to \int_\Omega \phi(x) \, d\gamma(x)
\end{equation*}
for all $\phi \in \C_0(\Omega)$. For every fixed $n$, the minimum is atained if 
\begin{equation*}
    \TVpwL^{\gamma_n}(u_n) = \min_{\substack{g\in L^1(\Omega) \\|g|\leq\gamma_n }} \int_{|Du_n|\geq\gamma_n } \norm{Du_n(x) - g_n(x)}_2 \, dx,
\end{equation*}
where $\norm{\cdot}_2$ denotes the pointwise $2$-norm. For every $x \in \Omega$ we have that
\begin{equation*}
    \norm{Du_n(x) - g_n(x)}_2 = \norm{Du_n(x)}_2 - \norm{g_n(x)}_2,
\end{equation*}
hence
\[
\begin{aligned}
    \TVpwL^{\gamma_n}(u_n) &= \min_{\substack{g\in L^1(\Omega) \\|g|\leq\gamma_n }} \int_{|Du_n|\geq\gamma_n } (\norm{Du_n(x)} - \norm{g_n(x)}_2) \, dx \\
    &= \int_{|Du_n|\geq\gamma_n } (\norm{Du_n(x)} - \gamma_n(x)) \, dx  = \norm{(\abs{Du_n} - \gamma_n)_+}_\M.
\end{aligned}
\]
Since by Corollary~\ref{cor:jointly_lsc} $\TVpwL$ is jointly lower semicontinuous in $u$ and $\gamma$, we get that
\begin{equation*}
    \TVpwL^\gamma(u) \leq \liminf_{n \to \infty} \TVpwL^{\gamma_n}(u_n) = \liminf_{n \to \infty} \norm{(\abs{Du_n} - \gamma_n)_+}_\M = \norm{(\abs{Du} - \gamma)_+}_\M,
\end{equation*}
which proves the assertion.
\end{proof}
\begin{corollary}
It is also clear from the proof that $\TVpwL$ is continuous in $\BV$, i.e. if $u_n \to u$ in $L^1$ and $Du_n \to Du$ in $\Mg(\Omega)$ then $\TVpwL^\gamma(u_n) \to \TVpwL^\gamma(u)$.
\end{corollary}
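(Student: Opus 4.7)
The strategy is to bracket $\TVpwL^\gamma(u)$ between the $\liminf$ and the $\limsup$ of $\TVpwL^\gamma(u_n)$, using the two descriptions now on the table.

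The lower bound is handed to us for free: specialising Corollary~\ref{cor:jointly_lsc} to the constant sequence $\gamma_n \equiv \gamma$ (and using that $u_n \to u$ in $L^1$ is part of strict $\BV$ convergence) gives
\begin{equation*}
    \TVpwL^\gamma(u) \leq \liminf_{n \to \infty} \TVpwL^\gamma(u_n).
\end{equation*}
For the matching $\limsup$ inequality I would invoke Theorem~\ref{thm:tvpwl_third} to rewrite the task as
\begin{equation*}
    \limsup_{n \to \infty} \norm{(|Du_n| - \gamma)_+}_\M \leq \norm{(|Du| - \gamma)_+}_\M.
\end{equation*}
Strict convergence $Du_n \to Du$ in $\Mg(\Omega;\R^d)$ forces, in particular, $|Du_n|(\Omega) \to |Du|(\Omega)$ along with $|Du_n| \wsto |Du|$, so the positive scalar measures $|Du_n|$ converge narrowly to $|Du|$. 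Writing $\norm{(\mu - \gamma)_+}_\M = \mu(\Omega) - (\mu \wedge \gamma)(\Omega)$ with $\mu \wedge \gamma$ the measure-theoretic minimum, the limsup bound then reduces to a semicontinuity statement for $\mu \mapsto (\mu \wedge \gamma)(\Omega)$ under narrow convergence, which is a standard Portmanteau-type computation combined with the convergence of the total masses.

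The main obstacle is precisely the passage to the limit in the positive-part measure when $\gamma$ carries singular components: mass of $|Du_n|$ drifting across the support of such a component can cause $(\mu \wedge \gamma)(\Omega)$ to be discontinuous in the limit. The cleaner route, which is what the phrase ``clear from the proof'' seems to point at, is to mimic the smooth-approximation step inside the proof of Theorem~\ref{thm:tvpwl_third}: for each $u_n$ take a strict mollification $u_n^{(k)} \in \C_0^\infty(\Omega)$ together with an $L^1$ approximant $\gamma^{(k)}$ of $\gamma$, compute $\TVpwL^{\gamma^{(k)}}(u_n^{(k)})$ explicitly via the pointwise Hahn formula, and extract the required inequality through a diagonal argument in $n$ and $k$, leaning on Corollary~\ref{cor:jointly_lsc} to handle the passage to the non-smooth limit.
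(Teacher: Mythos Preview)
Your lower-semicontinuity half is exactly what the paper intends by ``clear from the proof'' (the paper gives no argument beyond that phrase), and rewriting the $\limsup$ task via Theorem~\ref{thm:tvpwl_third} as $\limsup_n \norm{(|Du_n|-\gamma)_+}_\M \leq \norm{(|Du|-\gamma)_+}_\M$ is the natural route. You are also right to flag the singular-$\gamma$ obstacle.

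The gap is that this obstacle is not a technicality your diagonal mollification can remove: for general $\gamma\in\M(\Omega)$ and strict $\BV$ convergence the claimed continuity is simply false, so no argument can succeed. Take $\Omega=(-1,1)$, $\gamma=\delta_0$, $u=\charf_{(0,1)}$, $u_n=\charf_{(1/n,1)}$. Then $u_n\to u$ in $L^1$ with $|Du_n|(\Omega)=|Du|(\Omega)=1$, i.e.\ strict convergence; yet every feasible $g$ in Definition~\ref{def:tvpwl_primal} is supported at $0$, so $\TVpwL^\gamma(u_n)=\norm{\delta_{1/n}}_\Mg=1$ for all $n$ while $\TVpwL^\gamma(u)=0$. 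In your notation, $(|Du_n|\wedge\gamma)(\Omega)=0\not\to 1=(|Du|\wedge\gamma)(\Omega)$, so the lower semicontinuity of $\mu\mapsto(\mu\wedge\gamma)(\Omega)$ you invoke genuinely fails. Your diagonal scheme cannot help either, since Corollary~\ref{cor:jointly_lsc} only ever delivers inequalities in the $\liminf$ direction, whereas the missing bound is a $\limsup$. The Corollary therefore needs an extra hypothesis: either $\gamma$ is absolutely continuous (then your first argument can be made to work), or ``$Du_n\to Du$ in $\Mg(\Omega)$'' is read literally as norm convergence in $\Mg$, in which case the result is immediate from $\TVpwL^\gamma(u)=\dist_\Mg(Du,\{g:|g|\leq\gamma\})$ being $1$-Lipschitz in $Du$ and none of your machinery is required.
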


\subsection{Coercivity}
It is easy to see from Definition~\ref{def:tvpwl_primal} that for any $\gamma \geq 0$ 
\begin{equation*}
    \TVpwL^\gamma(u) \leq \TV(u)
\end{equation*}
for all $u \in L^1$. If $\gamma(\Omega)$ is finite, then we can obtain the converse inequality, up to a constant shift. Therefore, $\TVpwL$ and $\TV$ are topologically equivalent in the sense that one is bounded if and only if the other one is bounded.
\begin{theorem}\label{th:tvpwl_topequiv}
Let $\Omega\subset \R^d$ be a bounded Lipschitz domain and $\gamma\in \M(\Omega)$  a positive finite measure such that $\gamma(\Omega) < \infty$. The for every $u\in L^1(\Omega)$ the following inequalities hold
\[
\TV(u)-\gamma(\Omega)\leq \TVpwL^{\gamma}(u)\leq \TV(u).
\]
\end{theorem}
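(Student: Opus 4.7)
The plan is to prove the two inequalities separately, treating the upper bound via the primal formulation (Definition~\ref{def:tvpwl_primal}) and the lower bound via the third representation (Theorem~\ref{thm:tvpwl_third}).

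For the upper inequality $\TVpwL^{\gamma}(u)\leq \TV(u)$, I would simply choose the feasible point $g=0$ in Definition~\ref{def:tvpwl_primal}. Since $\gamma$ is a positive measure, the constraint $|0|=0\leq\gamma$ is satisfied, so
\[
\TVpwL^{\gamma}(u)\leq \norm{Du-0}_{\Mg}=\norm{Du}_{\Mg}=\TV(u).
\]
This also handles the degenerate case $u\in L^1(\Omega)\setminus\BV(\Omega)$, in which case both sides are $+\infty$.

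For the lower inequality, I would invoke Theorem~\ref{thm:tvpwl_third} to rewrite $\TVpwL^{\gamma}(u)=\norm{(|Du|-\gamma)_+}_{\M}$ and then apply the Hahn decomposition of the signed measure $|Du|-\gamma$. Writing $|Du|-\gamma=(|Du|-\gamma)_+-(|Du|-\gamma)_-$, where both parts are positive and mutually singular, and evaluating on the whole domain $\Omega$, we get
\[
\TV(u)-\gamma(\Omega)=|Du|(\Omega)-\gamma(\Omega)=(|Du|-\gamma)_+(\Omega)-(|Du|-\gamma)_-(\Omega)\leq (|Du|-\gamma)_+(\Omega)=\TVpwL^{\gamma}(u),
\]
using that $(|Du|-\gamma)_-$ is a positive measure. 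The assumption $\gamma(\Omega)<\infty$ ensures that the subtraction $\TV(u)-\gamma(\Omega)$ makes sense; if $\TV(u)=+\infty$, then, since $\gamma$ is a finite measure, $(|Du|-\gamma)_+$ also has infinite mass, and the inequality holds in the extended real sense.

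There is essentially no obstacle here: both bounds are direct consequences of results already proved in the paper. The only mild subtlety is making sure the Hahn decomposition argument is correctly applied to the signed measure $|Du|-\gamma$ (which requires $\gamma$ to be finite so that this difference is a well-defined signed Radon measure), and bookkeeping the case where $u\notin \BV(\Omega)$, both of which are routine.
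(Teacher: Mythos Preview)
Your proof is correct. The upper bound is handled identically to the paper (choosing $g=0$ in Definition~\ref{def:tvpwl_primal}). For the lower bound, however, the paper takes a more elementary route: it applies the reverse triangle inequality directly in the primal formulation, observing that for any feasible $g$ (i.e.\ $|g|\leq\gamma$) one has
\[
\norm{Du-g}_{\Mg}\geq \norm{Du}_{\Mg}-\norm{g}_{\Mg}\geq \norm{Du}_{\Mg}-\gamma(\Omega)=\TV(u)-\gamma(\Omega),
\]
and then passes to the infimum over $g$. Your argument instead invokes Theorem~\ref{thm:tvpwl_third} and the Hahn decomposition of $|Du|-\gamma$, which is perfectly valid but relies on a result whose own proof is considerably more involved (strict approximation in $\BV$, joint lower semicontinuity, etc.). The paper's approach is self-contained and uses nothing beyond the definition and the triangle inequality for the Radon norm; yours is shorter to write down but logically heavier.
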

\begin{proof}
We already established the right inequality. For the left one we observe that for any $g\in\Mg(\Omega;\R^d)$ such that $|g|\leq\gamma$ the following estimate  holds
\begin{equation*}
\norm{Du-g}_{\Mg} \geq \norm{Du}_{\Mg} - \norm{g}_{\Mg} \geq \norm{Du}_{\Mg} - \norm{\gamma}_{\Mg} = \TV(u) - \gamma(\Omega),
\end{equation*}
\noindent which also holds for the infimum over $g$.
\end{proof}

\subsection{Maximum Principle}\label{sec:max_pr}
First order $\TV$-type regularisers typically obey the maximum principle: if $u$ solves the ROF denoising problem
\begin{equation*}
    \min_{u \in U} \frac12 \norm{u-f}^2 + \TV(u),
\end{equation*}
then $\max_x u(x) \leq \max_x f(x)$ and $\min_x u(x) \geq \min_x f(x)$, where  the minima and maxima are understood in the essential sense. Second order regularisers such as Total Generalised Variation ($\TGV$) and second order Total Variation ($\TV^2$)  lack this property. To see this, consider the following simple example.

Let $f \in L^2([-1,1])$ be as follows
\begin{equation*}
    f(x) = \begin{cases}
    -\frac12    \quad &\text{if $-1 \leq x \leq -\frac12$}, \\
    x \quad &\text{if $-\frac12 \leq x \leq \frac12$}, \\
    \frac12 \quad &\text{if $\frac12 \leq x \leq 1$}.
    \end{cases}
\end{equation*}
Consider the following denoising problem using second order Total Variation $\TV^2$~\cite{Chambolle_Lions_TV2:1997}
\begin{equation*}
    \min_u \frac12 \norm{u-f}_2^2 + \alpha\norm{u''}_1,
\end{equation*}
where $u''$ denotes the second derivative of $u$. For a sufficently large regularisation parameter $\alpha$ the solution will lie in the kernel of the regulariser, i.e. it will be affine and by symmetry we can assume that it is linear. Hence, for a sufficiently large $\alpha$, the above problem is equivalent to the following one
\begin{equation*}
    \min_{c \in \R} \int_{-1}^1 (f(x) - cx)^2 \, dx.
\end{equation*}
It is easy to verify that the minimum is attained at $c=\frac{11}{16}$ and $u(1) = \frac{11}{16} > \frac12$. This example is illustrated in Figure~\ref{fig:TV2_max_pr}.

It is known that for some combinations of parameters $\TGV$ reconstructions coincide with those obtained with $\TV^2$~\cite{Scherzer_Poschl_1dTGV:2015,papafitsoros_brdies:2015}, hence the above example also applies to $\TGV$. Even in cases when $\TGV$ produces reconstructions that are different from both $\TV$ and $\TV^2$, the maximum principle can be still violated as examples in~\cite{Scherzer_Poschl_1dTGV:2015} and~\cite{papafitsoros_brdies:2015} demonstrate. For instance, Figure 3 in~\cite{papafitsoros_brdies:2015} shows the results of $\TGV$ denosing of a step function in one dimension and Figure 7.3 in~\cite{Scherzer_Poschl_1dTGV:2015} $\TGV$ denoising of a characteristic function of a subinterval. In both cases we see that the maximum principle is violated.


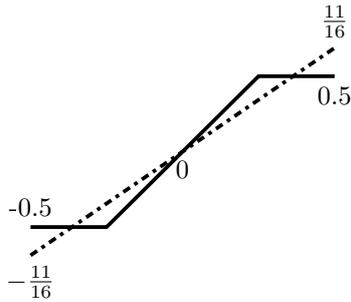
\begin{figure}
    \centering
    \begin{tikzpicture}
    \draw[line width=0.5mm]   (-2*1,-2*0.5) node[align=left,above] {-0.5}  -- (-2*0.5,-2*0.5) -- (2*0.5,2*0.5) -- (2*1,2*0.5) node[align=right,below] {0.5};
    \draw[line width=0.5mm,dash dot] (-2*1,-2*11/16) node[align=left,below] {$-\frac{11}{16}$} -- (2*1,2*11/16) node[align=right,above] {$\frac{11}{16}$};
    (0,0) \node[align=right,below] {0};
    \end{tikzpicture}
    \caption{The $\TV^2$ solution (dash-dotted line) violates the maximum principle by attaining larger ($\frac{11}{16}$) and smaller ($-\frac{11}{16}$) values than the original function (solid line).} 
    \label{fig:TV2_max_pr}
\end{figure}

The following result shows that $\TVpwL$ obeys the maximum principle.

\begin{theorem}
Let $f \in L^2(\Omega)$ and
\begin{equation*}
u = \argmin_{u \in U}  \frac12\norm{u-f}^2 + \TVpwL^\gamma (u).
\end{equation*}
Then $\max_x u(x) \leq \max_x f(x)$ and $\min_x u(x) \geq \min_x f(x)$, where the minima and maxima are understood in the essential sense.
\end{theorem}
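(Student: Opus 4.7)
My plan is to prove this by a standard truncation argument. Let $M \defeq \esssup_x f(x)$ and define the truncated candidate $\tilde u(x) \defeq \min(u(x), M)$. Since the topological equivalence in Theorem~\ref{th:tvpwl_topequiv} forces any minimiser of the ROF-type problem to lie in $\BV(\Omega) \cap L^2(\Omega)$, the truncation is well-defined in $\BV$. The strategy is to show that the objective value at $\tilde u$ is no larger than at $u$, and \emph{strictly} smaller whenever $\{u > M\}$ has positive Lebesgue measure; that would contradict the optimality of $u$ unless $u \leq M$ almost everywhere.

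For the data fidelity the estimate is pointwise: whenever $u(x) > M \geq f(x)$ one has $\abs{\tilde u(x) - f(x)} = M - f(x) < u(x) - f(x) = \abs{u(x) - f(x)}$, and the two sides agree otherwise. Integration then yields $\norm{\tilde u - f}_{L^2}^2 \leq \norm{u - f}_{L^2}^2$, with strict inequality precisely when $\{u > M\}$ has positive measure.

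For the regulariser I intend to work from the third equivalent definition of Theorem~\ref{thm:tvpwl_third}, namely $\TVpwL^\gamma(u) = \norm{(\abs{Du} - \gamma)_+}_\M$. The $\BV$ chain rule applied to the $1$-Lipschitz non-decreasing map $t \mapsto \min(t, M)$ gives $D\tilde u = \chi_{\{u \leq M\}} Du$ as measures, and therefore $\abs{D\tilde u} \leq \abs{Du}$ as positive Radon measures on $\Omega$. A simple density-level check shows that if $0 \leq \mu \leq \nu$ and $\gamma \geq 0$ are positive measures, then $(\mu - \gamma)_+ \leq (\nu - \gamma)_+$; taking total masses with $\mu = \abs{D\tilde u}$ and $\nu = \abs{Du}$ yields $\TVpwL^\gamma(\tilde u) \leq \TVpwL^\gamma(u)$.

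Combining the two estimates produces a strict decrease of the full objective whenever $\{u > M\}$ has positive measure, contradicting the optimality of $u$; hence $\esssup u \leq M$, and the symmetric truncation $\max(u, \essinf f)$ gives the lower bound. The main obstacle I anticipate is justifying $\abs{D\tilde u} \leq \abs{Du}$ on the jump part of $Du$: a jump of $u$ from $a$ to $b$ across the jump set is mapped to a jump from $\min(a, M)$ to $\min(b, M)$, which always shrinks (or removes) the jump height. I would cite Vol'pert's chain rule from standard $\BV$ theory (e.g.\ Ambrosio--Fusco--Pallara) to handle this rigorously rather than re-derive it.
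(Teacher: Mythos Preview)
Your proposal is correct and follows essentially the same truncation argument as the paper: reduce the data term strictly on the overshoot set, use the third equivalent definition $\TVpwL^\gamma(u) = \norm{(\abs{Du}-\gamma)_+}_\M$ together with $\abs{D\tilde u} \leq \abs{Du}$ to control the regulariser, and derive a contradiction. The only cosmetic difference is that the paper truncates at both ends simultaneously via $\hat u = (u \wedge C) \vee c$, and note that your intermediate formula $D\tilde u = \chi_{\{u\leq M\}} Du$ is not the precise form of the $\BV$ chain rule (the jump part needs the traces), but since you only use the consequence $\abs{D\tilde u} \leq \abs{Du}$, which indeed follows from the $1$-Lipschitz composition result you cite, the argument is sound.
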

\begin{proof}
Denote
\begin{equation*}
    C \defeq \max_x f(x) \quad \text{and} \quad c \defeq \min_x f(x).
\end{equation*} 
and define a cut-off function $\hat u$ as follows
\begin{equation*}
    \hat u \defeq (u \wedge C) \vee c,
\end{equation*}
where $\wedge$ denotes the infimum and $\vee$ the supremum of two functions. Then clearly $\abs{D \hat u} \leq \abs{Du}$ in the sense of measures. Hence
\begin{equation*}
    (\abs{D \hat u} - \gamma)_+ \leq (\abs{Du} - \gamma)_+
\end{equation*}
and using Theorem~\ref{thm:tvpwl_third} we conclude that 
\begin{equation*}
    \TVpwL^\gamma(\hat u) \leq \TVpwL^\gamma(u).
\end{equation*}
It is also clear that $\norm{\hat u - f} < \norm{u - f}$, unless $u=\hat u$. Therefore, 
\begin{equation*}
    \frac12\norm{\hat u-f}^2 + \TVpwL^\gamma (\hat u) < \frac12\norm{u-f}^2 + \TVpwL^\gamma (u).
\end{equation*}
Since $u$ is a minimiser, this is a contradiction and therefore $\hat u =u$. Hence, $c \leq u \leq C$, which proves the assertion.
\end{proof}

\subsection{Characterisation as a Convex Conjugate}

\begin{theorem}\label{Thm:conv_conj}
$\TVpwL$ is the convex conjugate of the following functional $f \colon Z \to \R$, where $Z$ is the pre-dual space of $\BV(\Omega)$~\cite{Burger_Osher_TV_Zoo}
\begin{equation*}
    F(p) \defeq \inf_\phi (\gamma, \abs{\phi}) \quad \text{s.t. $\phi \in \C_0^\infty(\Omega;\R^d)$, $\abs{\phi} \leq 1$ and $D^*\phi = p$}.
\end{equation*}
\end{theorem}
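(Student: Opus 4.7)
The plan is to compute the convex conjugate $F^\ast$ directly from its definition and recognise the result as the dual formulation of $\TVpwL^\gamma$ already established in Theorem~\ref{thm:tvpwl_dual}. This reduces the proof to an algebraic manipulation combined with a sup--inf (or rather a sup--sup) interchange that is justified automatically because the inner inf enters with a minus sign.

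First, I would write out
\begin{equation*}
F^\ast(u) \;=\; \sup_{p\in Z}\bigl\{\langle p,u\rangle - F(p)\bigr\}
\;=\; \sup_{p\in Z}\sup_{\substack{\phi\in \C_0^\infty(\Omega;\R^d)\\ |\phi|\leq 1,\; D^\ast\phi=p}} \bigl\{\langle p,u\rangle - (\gamma,|\phi|)\bigr\},
\end{equation*}
where the infimum defining $F(p)$ becomes a supremum because of the surrounding minus sign. Next, I would observe that the constraint $D^\ast\phi=p$ makes $p$ redundant once we optimise jointly over the pair $(p,\phi)$: projecting onto the $\phi$-variable, the feasible set is precisely $\{\phi\in\C_0^\infty(\Omega;\R^d) : |\phi|\leq 1\}$ (with $p$ then reconstructed as $D^\ast\phi$). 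Hence
\begin{equation*}
F^\ast(u) \;=\; \sup_{\substack{\phi\in\C_0^\infty(\Omega;\R^d)\\ |\phi|\leq 1}}\bigl\{\langle D^\ast\phi,u\rangle - (\gamma,|\phi|)\bigr\}.
\end{equation*}

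Then I would apply the integration-by-parts identity $\langle D^\ast\phi,u\rangle = -\int_\Omega u\,\div\phi\,dx$, valid for $u\in L^1(\Omega)$ and $\phi\in \C_0^\infty(\Omega;\R^d)$, and finally perform the sign flip $\phi\mapsto -\phi$, which preserves both the feasible set $\{|\phi|\leq 1\}$ and the term $(\gamma,|\phi|)$. This gives
\begin{equation*}
F^\ast(u) \;=\; \sup_{\substack{\phi\in\C_0^\infty(\Omega;\R^d)\\ |\phi|\leq 1}}\left\{\int_\Omega u\,\div\phi\,dx - \int_\Omega |\phi|\,d\gamma\right\},
\end{equation*}
which coincides with the expression for $\TVpwL^\gamma(u)$ from Theorem~\ref{thm:tvpwl_dual}, completing the proof.

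The only delicate point is a careful interpretation of the duality pairings and of the constraint $D^\ast\phi = p$ in the pre-dual space $Z$ of $\BV(\Omega)$: one must check that the set $\{D^\ast\phi : \phi\in\C_0^\infty(\Omega;\R^d),\,|\phi|\leq 1\}$ lies inside $Z$ (so that the sup over $p$ in the conjugate and the sup over feasible $\phi$ really produce the same object) and that the pairing $\langle D^\ast\phi,u\rangle_{Z,Z^\ast}$ agrees with $-\int_\Omega u\,\div\phi\,dx$. Once this identification is settled, the rest of the argument is purely formal bookkeeping.
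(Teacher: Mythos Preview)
Your proposal is correct. The first half---writing out $F^\ast$, turning the $-\inf$ into a $\sup$, and eliminating the redundant variable $p$ through the constraint $D^\ast\phi=p$---is exactly what the paper does. The difference is what happens next: once you reach $\sup_{|\phi|\leq 1}\{\langle D^\ast\phi,u\rangle - (\gamma,|\phi|)\}$, you recognise it (after the sign flip $\phi\mapsto-\phi$) as the dual formulation of $\TVpwL^\gamma$ from Theorem~\ref{thm:tvpwl_dual} and are done. The paper instead rewrites $(\gamma,|\phi|)=\sup_{|g|\leq\gamma}(\phi,g)$, extends the supremum from $\C_0^\infty$ to $\C_0$, applies the Nonsymmetrical Minimax Theorem to swap $\sup_\phi$ and $\inf_g$, and lands on the \emph{primal} definition $\inf_{|g|\leq\gamma}\norm{Du-g}_\Mg$. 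Your route is shorter and avoids the minimax machinery entirely by leaning on the already-established Theorem~\ref{thm:tvpwl_dual}; the paper's route is self-contained in the sense that it does not invoke that earlier theorem, and it delivers the primal form directly.
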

\begin{proof}
First we note that 
\begin{equation*}
    (\gamma, \abs{\phi}) = \sup\limits_{\substack{g\in\Mg(\Omega;\R^d)\\ \abs{g} \leq \gamma}} (\phi, g),
\end{equation*}
hence
\begin{equation*}
    F(p) = \inf\limits_{\substack{\phi \in\C_0^\infty(\Omega;\R^d) \\ \abs{\phi} \leq 1 \\ D^*\phi = p}} \sup\limits_{\substack{g\in\Mg(\Omega;\R^d)\\ \abs{g} \leq \gamma}} (\phi, g).
\end{equation*}
The convex conjugate of $F$ is given by
\begin{eqnarray*}
    F^*(u) &=& \sup_{p \in Z} \left[ (p,u) - \inf\limits_{\substack{\phi \in\C_0^\infty(\Omega;\R^d) \\ \abs{\phi} \leq 1 \\ D^*\phi = p}} \sup\limits_{\substack{g\in\Mg(\Omega;\R^d)\\ \abs{g} \leq \gamma}} (\phi, g) \right] \\
    &=& \sup\limits_{\substack{p \in Z \\ \phi \in\C_0^\infty(\Omega;\R^d) \\ \abs{\phi} \leq 1 \\ D^*\phi = p}} \left[ (p,u) - \sup\limits_{\substack{g\in\Mg(\Omega;\R^d)\\ \abs{g} \leq \gamma}} (\phi, g) \right] \\
    &=& \sup\limits_{\substack{\phi \in\C_0^\infty(\Omega;\R^d) \\ \abs{\phi} \leq 1}} \left[ (D^* \phi,u) - \sup\limits_{\substack{g\in\Mg(\Omega;\R^d)\\ \abs{g} \leq \gamma}} (\phi, g) \right]
\end{eqnarray*}
for any $u \in \BV(\Omega)$.  We further obtain that
\begin{eqnarray*}
    F^*(u) &=& \sup\limits_{\substack{\phi \in\C_0^\infty(\Omega;\R^d) \\ \abs{\phi} \leq 1}} \inf\limits_{\substack{g\in\Mg(\Omega;\R^d)\\ \abs{g} \leq \gamma}} \left[ (\phi,Du) -  (\phi, g) \right].
\end{eqnarray*}
Since $\C_0^\infty(\Omega;\R^d)$ is dense in $\C_0(\Omega;\R^d)$, we can also take the supremum over $\phi \in \C_0(\Omega;\R^d)$ and obtain
\begin{eqnarray*}
    F^*(u) &=& \sup\limits_{\substack{\phi \in\C_0(\Omega;\R^d) \\ \abs{\phi} \leq 1}} \inf\limits_{\substack{g\in\Mg(\Omega;\R^d)\\ \abs{g} \leq \gamma}} \left[ (\phi,Du) -  (\phi, g) \right].
\end{eqnarray*}
Since the set $\{\phi \in\C_0(\Omega;\R^d) \colon \abs{\phi} \leq 1\}$ is convex and the set $\{g\in\Mg(\Omega;\R^d) \colon \abs{g} \leq \gamma\}$ is convex, closed and bounded and $\Mg(\Omega;\R^d) = (\C_0(\Omega;\R^d))^*$, we can apply the Nonsymmetrical Minimax Theorem from \cite[Th. 3.6.4]{Borwein_Zhu} and switch the supremum and maximum, obtaining
\begin{eqnarray*}
    F^*(u) &=& \inf\limits_{\substack{g\in\Mg(\Omega;\R^d)\\ \abs{g} \leq \gamma}} \sup\limits_{\substack{\phi \in\C_0(\Omega;\R^d) \\ \abs{\phi} \leq 1}} \left[ (\phi,Du) -  (\phi, g) \right] \\
    & = & \inf\limits_{\substack{g\in\Mg(\Omega;\R^d)\\ \abs{g} \leq \gamma}} \sup\limits_{\substack{\phi \in\C_0(\Omega;\R^d) \\ \abs{\phi} \leq 1}} \left[ (\phi,Du -g) \right] \\
    &=& \inf\limits_{\substack{g\in\Mg(\Omega;\R^d)\\ \abs{g} \leq \gamma}} \norm{Du-g}_\Mg = \TVpwL^\gamma(u),
\end{eqnarray*}
which proves the assertion.
\end{proof}

\begin{remark}
We notice that for all $p$ the predual of $\TVpwL$ is greater or equal to the predual of $\TV$
\begin{equation*}
    F(p) \geq \charf_{\substack{\phi \in\C_0^\infty(\Omega;\R^d) \\ \abs{\phi} \leq 1 \\ D^*\phi = p}}(p),
\end{equation*}
which agrees with the fact that $\TVpwL(u) \leq \TV(u)$ for all $u$ (convex conjugation is order reversing).
\end{remark}

\subsection{Infimal-Convolution Type Regularisers}
In this section we would like to highlight  connections to infimal convolution type $\TVLp$ regularisers introduced in~\cite{Burger_TVLp_2016,Burger_TVLp_pt2_2016}. For a $u\in L^1(\Omega)$ and $1<p \leq \infty$, $\TVLp(u)$ is defined as follows
\begin{equation}\label{eqn:TVLp}
\TVLp_{\beta}(u) \defeq \inf_{g\in L^p(\Omega;\R^d)}||Du-g||_{\Mg}+\beta||g||_{L^p(\Omega;\R^d)},
\end{equation}
\noindent where $\beta>0$ is a constant. As noted in~\cite{TVwpL_SSVM}, that for a weighted $\infty$-norm, $\TVL^\infty$ and $\TVpwL^\gamma$ are the same thing, provided that the weighting $\beta = \beta(x)$ is chosen appropriately. It turns out that if we optimise jointly over $g \in \Mg(\Omega;\R^d)$ and $\gamma \in L^p(\Omega)$ for $1<p<\infty$, we can recover other $\TVLp$ regularisers.

Consider the following optimisation problem (cf. Definition~\ref{def:tvpwl_primal})
\begin{equation*}
    \inf_{\substack{g\in\Mg(\Omega;\R^d) \\ \abs{g} \leq \gamma \\ \gamma \in L^p(\Omega)}} \norm{Du-g}_{\Mg} + \beta \norm{\gamma}_{L^p(\Omega)}^p.
\end{equation*}
If at an optimal solution $(g^*,\gamma^*)$ the constraint $\abs{g} \leq \gamma$ is inactive in some $\omega \subset \Omega$ with $\abs{\omega}>0$, then we can decrease the value of the objective by choosing $\hat \gamma \defeq \abs{g^*}$. Hence, the constraint $\abs{g} \leq \gamma$ is always active at an optimum and we can write equivalently
\begin{equation*}
    \inf_{g\in\Mg(\Omega;\R^d)} \norm{Du-g}_{\Mg} + \beta \norm{g}_{L^p(\Omega;\R^d)}^p,
\end{equation*}
which is equivalent to~\eqref{eqn:TVLp}.

\section{Numerical Implementation}\label{sec:num_impl}
In this section we will describe a primal-dual scheme we use to solve optimisation problems involving $\TVpwL$. In order to have a fair comparison of different regularisers that is independent of the regularisation parameter, we will solve the following optimisation problem instead of~\eqref{eq:tvpwl_inspo}
\begin{equation}\label{eq:resid_meth}
    \min_u \reg(u) \quad \text{s.t. $\norm{u-f}_2 \leq \delta$},
\end{equation}
where $f$ is the noisy data, $\delta$ is its noise level and $\reg$ is the regulariser; we use $\reg = \TV$; $\TGV$ and $\TVpwL$. Problems~\eqref{eq:resid_meth} and~\eqref{eq:tvpwl_inspo} are equivalent if the regularisation parameter $\alpha$ is chosen according to the discrepancy principle~\cite{engl:1996}.

\subsection{Saddle point problem for $\TVpwL$}
We now provide the details of the numerical implementation of~\eqref{eq:resid_meth} as a saddle-point problem.
From now on we consider our problem in finite dimensions. The Radon norm $\norm{\cdot}_\Mg$ will become $\norm{\cdot}_{2,1}$, where the index $2$ denotes the inner (pointwise) $2$-norm of a vector and $1$ denotes the $1$-norm over the image domain. We will still use the notation $\int_\Omega \diff\xbold$ for the integral over $\Omega$, understanding that it becomes a summation in finite dimensions.

In this section, we will denote the data constraint by $F(u;f) \defeq \charf_{\norm{\cdot-f}_2 \leq \delta} (u)$ and by $R(\ybold)$ the following distance  
\[
    R(\ybold) \defeq \dist_{\mathcal C_\gamma} (\ybold) = \min_{\xbold \in \mathcal C_\gamma} \norm{\ybold-\xbold}_{2,1},
\]
where $\mathcal{C}_\gamma = \{ \ybold :\Omega\to\RR^2 \colon \abs{\ybold(\xbold)} \leq \gamma(\xbold), \,\, \xbold \in \Omega \}$.
Thus, we can rewrite problem \eqref{eq:resid_meth} as follows
\begin{equation}\label{eq:primal}
    \min_u R(\grad u) + F(u;f),
\end{equation}
where $\grad$ denotes the (discrete) gradient.

\begin{lemma}
The Fenchel conjugate of the functional $R$, evaluated at the dual variable $\pbold$, is given by:
\[
R^\ast(\pbold)
= 
\int_\Omega \norm{\pbold(\xbold)}_2 \gamma(\xbold) \diff\xbold
+
\chi_{\{\norm{\blank}_{2,\infty}\leq 1\}}(\pbold).
\]
\end{lemma}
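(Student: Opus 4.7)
The plan is to recognise that $R$ is an infimal convolution and apply the classical formula $(f \square g)^* = f^* + g^*$, so that I only have to compute two elementary conjugates separately. Indeed, writing $\chi_{\mathcal{C}_\gamma}$ for the indicator of the constraint set, the definition
\[
R(\ybold) = \min_{\xbold \in \mathcal{C}_\gamma} \norm{\ybold-\xbold}_{2,1} = \bigl(\norm{\cdot}_{2,1} \,\square\, \chi_{\mathcal{C}_\gamma}\bigr)(\ybold)
\]
exhibits $R$ as exactly such an infimal convolution. Since both functionals are proper, convex and lower semicontinuous and the $\norm{\cdot}_{2,1}$ norm is finite everywhere, the conjugate of the infimal convolution decomposes as a sum without any qualification issues.

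Next I would compute the two conjugates. The conjugate of the $\norm{\cdot}_{2,1}$ norm is the standard dual-norm indicator
\[
(\norm{\cdot}_{2,1})^*(\pbold) = \chi_{\{\norm{\blank}_{2,\infty}\leq 1\}}(\pbold),
\]
which follows immediately from the pointwise Cauchy--Schwarz inequality together with the observation that a maximising $\ybold$ can be constructed pointwise by aligning $\ybold(\xbold)$ with $\pbold(\xbold)$. The conjugate of $\chi_{\mathcal{C}_\gamma}$ is by definition the support function of $\mathcal{C}_\gamma$, and the pointwise nature of the constraint $|\xbold(\xbold)| \leq \gamma(\xbold)$ allows me to compute this supremum pointwise: choosing $\xbold(\xbold) = \gamma(\xbold)\,\pbold(\xbold)/\norm{\pbold(\xbold)}_2$ (and $\xbold(\xbold)=0$ where $\pbold(\xbold)=0$) yields
\[
\chi_{\mathcal{C}_\gamma}^*(\pbold) = \sup_{\xbold \in \mathcal{C}_\gamma} \int_\Omega \pbold(\xbold)\cdot\xbold(\xbold)\,\diff\xbold = \int_\Omega \norm{\pbold(\xbold)}_2\,\gamma(\xbold)\,\diff\xbold.
\]
Summing the two contributions gives the claimed expression for $R^*$.

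The only step that requires a little care is the pointwise realisation of the supremum in the support function, since one must check that the candidate $\xbold$ is admissible (measurable with $|\xbold|\leq\gamma$) and, in the continuous setting, lies in the appropriate function space so that the duality pairing is well defined. This is not genuinely difficult: measurability is preserved under the pointwise normalisation, and $\gamma \in \mathcal M(\Omega)$ together with $\pbold \in \mathcal C_0(\Omega;\R^2)$ ensures that the pairing $\int_\Omega \norm{\pbold}_2\,\gamma$ makes sense. In the finite-dimensional discrete setting adopted for the numerical implementation, these issues disappear entirely and the pointwise argument is immediate.
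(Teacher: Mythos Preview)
Your proof is correct and is essentially the same argument as the paper's, only packaged at a slightly higher level of abstraction: the paper expands $\sup_\ybold[\langle\pbold,\ybold\rangle-\inf_{\zbold\in\mathcal C_\gamma}\norm{\ybold-\zbold}_{2,1}]$, swaps the sup and inf, shifts variables, and thereby recovers exactly the two terms you obtain by citing the identity $(f\,\square\,g)^*=f^*+g^*$. The pointwise Cauchy--Schwarz step for the support function of $\mathcal C_\gamma$ is identical in both proofs.
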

\begin{proof}
We have:
\begin{align*}
R^\ast(\pbold)=\left(\dist_{C_\gamma}(\cdot)\right)^\ast (\pbold) 
&= 
\sup_\ybold \left( \langle \pbold,\ybold \rangle - \dist_{C_\gamma}(\ybold)\right)\\
&= 
\sup_\ybold  \left(\langle \pbold,\ybold \rangle - \inf_{\zbold\in C_\gamma} \norm{\ybold-\zbold}_{2,1} \right)\\
&= 
\sup_{\substack{\ybold \\ \zbold\in C_\gamma}} \left( \langle \pbold,\ybold \rangle  - \norm{\ybold-\zbold}_{2,1}\right)\\
&= 
\sup_{\zbold\in C_\gamma} \left( \sup_\ybold \left( \langle \pbold,\ybold-\zbold \rangle - \norm{\ybold-\zbold}_{2,1} \right) + \langle \pbold,\zbold\rangle\right)\\
&= 
\sup_{\zbold\in C_\gamma} \left( \chi_{\{\norm{\blank}_{2,1}\leq 1\}}(\pbold) + \langle \pbold,\zbold\rangle\right)\\
&= 
\sup_{\zbold\in C_\gamma}  \langle \pbold,\zbold\rangle + \chi_{\{\norm{\blank}_{2,\infty}\leq 1\}}(\pbold)\\
&= 
\sup_{\zbold:\norm{\zbold}_2\leq\gamma}  \langle \pbold,\zbold\rangle + \chi{\{\norm{\blank}_{2,\infty}\leq 1\}}(\pbold)\\
&= 
\sup_{\zbold:\norm{\zbold}_2\leq\gamma}  \int_\Omega \langle \pbold(\xbold), \zbold(\xbold) \rangle \diff\xbold + \chi{\{\norm{\blank}_{2,\infty}\leq 1\}}(\pbold)\\
&=
\int_\Omega\norm{\pbold(\xbold)}_2 \gamma(\xbold)\diff\xbold + \chi{\{\norm{\blank}_{2,\infty}\leq 1\}}(\pbold),
\end{align*}
where the last equality is due to Cauchy-Schwarz
\[
 \langle \pbold(\xbold), \zbold(\xbold)\rangle \leq \norm{\pbold(\xbold)}_2  \norm{\zbold(\xbold)}_2 \leq \norm{\pbold(\xbold)}_2 \gamma(\xbold),
\]
which is also sharp if $\pbold(\xbold)$ and $\zbold(\xbold)$ are parallel.
\end{proof}

Thus, the saddle point problem associated to \eqref{eq:primal} shortened as
\[
\min_u \max_\pbold \langle \grad u,\pbold \rangle - R^\ast(\pbold) + F(u)
\]
reads as follows
\begin{equation}
\min_u \max_\pbold ~ \langle \grad u,\pbold \rangle - \chi_{\{\norm{\blank}_{2,\infty}\leq 1\}}(\pbold) + \int_\Omega \norm{\pbold(\xbold)}_2 \gamma(\xbold)\diff\xbold + F(u).
\label{eq: saddle-point problem}
\end{equation}

The saddle-point optimisation problem \eqref{eq: saddle-point problem} can be solved by using a Primal-Dual Hybrid Gradient (PDHG) scheme from  \cite{chambolle2010first}.
Let $L^2=\norm{\grad}^2$ be the squared operator norm (for which it holds in the discrete setting $L^2\leq 8/h$ when $\grad$ is approximated with a forward finite discretisation scheme on a grid of size $h$, typically $h=1$). 
Recalling that the adjoint of $\grad$ is $\grad^\star=-\div$, then for $\theta\in[0,1]$ and $\tau,\sigma>0$ such that $\tau\sigma L^2< 1$ the PDHG algorithm solving \eqref{eq: saddle-point problem} reads as follows
\begin{equation}
\begin{aligned}
\pbold^{k+1} &= \prox_{\sigma R^\ast}(\pbold^k+\sigma \grad \overline{u}^k),\\
u^{k+1} &= \prox_{\tau F} (u^k-\tau \grad^\ast \pbold^{k+1}),\\
\overline{u}^{k+1} &= u^{k+1} + \theta (u^{k+1}-u^{k}).
\end{aligned}
\label{eq: PDHG}
\end{equation}

In order to apply the scheme described in \eqref{eq: PDHG}, we need explicit  expressions for the proximal mappings $\prox_{\sigma R^\ast}(\blank)$ and $\prox_{\tau F}(\blank)$, which we obtain in Lemmas \ref{lem: proximal of R star} and \ref{lem: proximal of F} below.

\begin{lemma}\label{lem: proximal of R star}
For a given $\pbold^\diamond(\xbold)$, let $\alpha^\ast(\xbold)$ be defined as follows
\begin{equation}
\alpha^\ast(\xbold) = 1-\frac{\sigma \gamma(\xbold)}{\norm{\pbold^\diamond(\xbold)}}_2.
\label{eq: alpha star}
\end{equation}
The proximal map of $R^\ast$ is given by
\begin{equation}
\prox_{\sigma R^\ast}(\pbold^\diamond) =
\begin{dcases}
0 &\text{if } \alpha^\ast(\xbold) \leq 0\text{ i.e.\ } \norm{\pbold^\diamond(\xbold)}_2 \leq \sigma\gamma(\xbold),\\
\norm{\pbold^\diamond(\xbold)}_2^{-1}&\text{if } \alpha^\ast(\xbold) \geq \norm{\pbold^\diamond(\xbold)}_2^{-1}\text{ i.e.\ }\norm{\pbold^\diamond(\xbold)}_2\geq 1 + \sigma \gamma(\xbold),\\
\alpha^\ast(\xbold) &\text{otherwise}.
\end{dcases}
\label{eq: prox R ast}
\end{equation}
\end{lemma}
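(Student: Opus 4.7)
The plan is to exploit the pointwise-separable structure of $R^*$ and reduce the proximal computation to a one-dimensional problem at each $\xbold \in \Omega$.

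First, I would observe that $R^*$ is separable in $\xbold$, so the prox decouples and we can work pointwise. Fix $\xbold$ and abbreviate $p^\diamond = \pbold^\diamond(\xbold)$, $\gamma = \gamma(\xbold)$. We must solve
\[
\min_{p \in \R^d,\ \|p\|_2 \leq 1}\ \tfrac{1}{2}\|p - p^\diamond\|_2^2 + \sigma \gamma \|p\|_2.
\]
If $p^\diamond = 0$ the minimiser is trivially $0$ (which is consistent with both non-trivial branches of \eqref{eq: prox R ast} collapsing to $0$), so assume $p^\diamond \neq 0$.

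Next I would argue that the minimiser must be parallel to $p^\diamond$ with a nonnegative coefficient. Indeed, for any $p$ with given norm $r = \|p\|_2$, the Cauchy--Schwarz inequality gives $\langle p, p^\diamond\rangle \leq r\|p^\diamond\|_2$, with equality iff $p = r\, p^\diamond / \|p^\diamond\|_2$. Since the only term coupling $p$ and $p^\diamond$ is $-\langle p, p^\diamond\rangle$, the optimum over directions at fixed norm is attained when $p$ aligns with $p^\diamond$. Writing $p = \alpha\, p^\diamond$ with $\alpha \geq 0$, the constraint $\|p\|_2 \leq 1$ becomes $\alpha \in [0,\alpha_{\max}]$ with $\alpha_{\max} \defeq 1/\|p^\diamond\|_2$, and the objective reduces to the scalar function
\[
\Phi(\alpha) \defeq \tfrac{1}{2}(\alpha-1)^2 \|p^\diamond\|_2^2 + \sigma \gamma \alpha \|p^\diamond\|_2.
\]

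Then I would compute the unconstrained stationary point of $\Phi$. Setting $\Phi'(\alpha) = 0$ yields
\[
\alpha^* = 1 - \frac{\sigma \gamma}{\|p^\diamond\|_2},
\]
which matches \eqref{eq: alpha star}, and since $\Phi$ is strictly convex quadratic the constrained minimiser is $\operatorname{proj}_{[0,\alpha_{\max}]}(\alpha^*)$. A simple case analysis then yields \eqref{eq: prox R ast}: $\alpha^* \leq 0$ (equivalent to $\|p^\diamond\|_2 \leq \sigma\gamma$) gives $\alpha = 0$; $\alpha^* \geq \alpha_{\max}$, which after multiplying both sides by $\|p^\diamond\|_2$ reads $\|p^\diamond\|_2 - \sigma\gamma \geq 1$, i.e.\ $\|p^\diamond\|_2 \geq 1 + \sigma \gamma$, gives $\alpha = \alpha_{\max} = 1/\|p^\diamond\|_2$; and in the intermediate regime we keep $\alpha = \alpha^*$.

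There is no real obstacle here; the only delicate point is to verify that the indicator term in $R^*$ is correctly handled by the box constraint on $\alpha$ (this is where the middle branch $\alpha = 1/\|p^\diamond\|_2$ appears as a \emph{projection onto the unit ball} rather than a soft-thresholding step), and to confirm that the radial ansatz $p = \alpha p^\diamond$ is without loss of generality, which follows from the Cauchy--Schwarz argument above. Once these are in place, the three cases in \eqref{eq: prox R ast} are immediate.
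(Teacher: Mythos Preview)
Your proposal is correct and follows essentially the same route as the paper: both arguments exploit pointwise separability, use the Cauchy--Schwarz/direction argument to justify the radial ansatz $p=\alpha\,p^\diamond$, reduce to a one-dimensional quadratic in $\alpha$ on $[0,\|p^\diamond\|_2^{-1}]$, and obtain the three cases by projecting the unconstrained minimiser $\alpha^\ast$ onto this interval. Your treatment is in fact slightly more careful (you handle $p^\diamond=0$ explicitly and note strict convexity of $\Phi$), but the substance is identical.
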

\begin{proof}
For a given $\pbold^\diamond$, the proximal map of $R^\ast$ is formally written as:
\[
\begin{aligned}
\prox_{\sigma R^\ast}(\pbold^\diamond) 
&= 
\argmin_{\pbold:\norm{\pbold}_{2,\infty}\leq 1} \left(
R^\ast(\pbold) + \frac{1}{2\sigma}\norm{\pbold-\pbold^\diamond}_2^2
\right)
\\
&= 
\argmin_{\pbold:\norm{\pbold}_{2,\infty}\leq 1} 
\left( 
\int_\Omega \norm{\pbold(\xbold)}_2\gamma(\xbold)\diff\xbold
+ 
\frac{1}{2\sigma}\norm{\pbold-\pbold^\diamond}_2^2
\right)
\\
&= 
\argmin_{\pbold:\norm{\pbold}_{2,\infty}\leq 1} 
\left( 
\int_\Omega \left[ \norm{\pbold(\xbold)}_2\gamma(\xbold) + \frac{1}{2\sigma}\norm{\pbold(\xbold)-\pbold^\diamond(\xbold)}^2 \right] \diff\xbold
\right)\\
&= 
\argmin_{\pbold:\norm{\pbold}_{2,\infty}\leq 1} 
\left( 
\int_\Omega \left[ \norm{\pbold(\xbold)}_2\gamma(\xbold) + \frac{1}{2\sigma}\norm{\pbold(\xbold)}_2^2 
- \frac{1}{\sigma} \langle \pbold(\xbold), \pbold^\diamond(\xbold)\rangle + \frac{1}{2\sigma}\norm{\pbold^\diamond(\xbold)}_2^2 \right] \diff\xbold
\right)\\
&= 
\argmin_{\pbold:\norm{\pbold}_{2,\infty}\leq 1} 
\left( 
\int_\Omega \left[ \norm{\pbold(\xbold)}_2\gamma(\xbold) + \frac{1}{2\sigma}\norm{\pbold(\xbold)}_2^2 
- \frac{1}{\sigma} \langle \pbold(\xbold), \pbold^\diamond(\xbold)\rangle \right] \diff\xbold
\right)
\end{aligned}
\]
Since only the term $\langle \pbold(\xbold), \pbold^\diamond(\xbold) \rangle$ depends on the direction of $\pbold(\xbold)$, we can choose $\pbold(\xbold)=\alpha(\xbold)\pbold^\diamond(\xbold)$ with a scalar function $\alpha(\xbold)$ such that
\[
0\leq \alpha(\xbold)\leq \frac{1}{\norm{\pbold^\diamond(\xbold)}_2},
\]
where the second inequality comes from the constraint $\norm{\pbold}_{2,\infty}\leq 1$. Thus we have
\[
\begin{aligned}
\prox_{\sigma R^\ast}(\pbold^\diamond) 
&= 
\argmin_{\alpha(\xbold)\in\left[0,\norm{\pbold^\diamond(\xbold)}_2^{-1}\right]} 
\left( 
\int_\Omega \left[
\alpha(\xbold)\norm{\pbold^\diamond(\xbold)}_2\gamma(\xbold) 
+ \frac{1}{2\sigma}\alpha^2(\xbold)\norm{\pbold^\diamond(\xbold)}_2^2 
- \frac{1}{\sigma}\alpha(\xbold)\norm{\pbold^\diamond(\xbold)}_2^2 \right] \diff\xbold
\right)\\
&= 
\argmin_{\alpha(\xbold)\in\left[0,\norm{\pbold^\diamond(\xbold)}_2^{-1}\right]}
\left( 
\int_\Omega \left[
\alpha(\xbold)\gamma(\xbold) 
+ \frac{1}{2\sigma}\alpha^2(\xbold)\norm{\pbold^\diamond(\xbold)}_2
- \frac{1}{\sigma}\alpha(\xbold)\norm{\pbold^\diamond(\xbold)}_2 \right] \diff\xbold
\right)\\
&= 
\argmin_{\alpha(\xbold)\in\left[0,\norm{\pbold^\diamond(\xbold)}_2^{-1}\right]}
\left( 
\int_\Omega \left[
\alpha(\xbold)
\left( 
\gamma(\xbold)- \frac{1}{\sigma} \norm{\pbold^\diamond(\xbold)}_2
+ 
\frac{1}{2\sigma}\alpha(\xbold)\norm{\pbold^\diamond(\xbold)}_2\right) \right] \diff\xbold
\right),
\end{aligned}
\]
which is a quadratic form with roots
$\alpha_1(\xbold) = 0$, $\alpha_2(\xbold) = 2\left(1-\frac{\sigma \gamma(\xbold)}{\norm{p^\diamond(\xbold)}}_2\right)$ and minimum at
\[
\alpha^\ast(\xbold) = 1-\frac{\sigma \gamma(\xbold)}{\norm{\pbold^\diamond(\xbold)}}_2.
\]
Since  $\alpha(\xbold)$ is constrained to $\left[0,\norm{\pbold^\diamond(\xbold)}_2^{-1}\right]$, the minimum is at zero whenever $\alpha^\ast(\xbold) \leq 0$ and at $\norm{\pbold^\diamond(\xbold)}_2^{-1}$ whenever $\alpha^\ast(\xbold) \geq \norm{\pbold^\diamond(\xbold)}_2^{-1}$. Hence we get that
\[
\prox_{\sigma R^\ast}(\pbold^\diamond) =
\begin{dcases}
0 &\text{if } \alpha^\ast(\xbold) \leq 0\text{ i.e.\ } \norm{\pbold^\diamond(\xbold)}_2 \leq \sigma\gamma(\xbold),\\
\norm{\pbold^\diamond(\xbold)}_2^{-1}&\text{if } \alpha^\ast(\xbold) \geq \norm{\pbold^\diamond(\xbold)}_2^{-1}\text{ i.e.\ }\norm{\pbold^\diamond(\xbold)}_2\geq 1 + \sigma \gamma(\xbold),\\
\alpha^\ast(\xbold) &\text{otherwise}.
\end{dcases}
\]\qedhere
\end{proof}

\begin{lemma}\label{lem: proximal of F}
The proximal map of $F$ for a given $u^\diamond$ is
\begin{equation}
    \prox_{\tau F}(u^\diamond) =
    \begin{dcases}
    u^\diamond &\text{if } \norm{u^\diamond-f}_2\leq \delta,\\
    f+\frac{u^\diamond-f}{\norm{u^\diamond-f}_2}\delta &\text{if } \norm{u^\diamond-f}_2 > \delta.
    \end{dcases}
\label{eq: prox F}
\end{equation}
\end{lemma}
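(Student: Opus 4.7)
The plan is to recognize that $F(\cdot;f) = \chi_{\{\|\cdot - f\|_2 \leq \delta\}}$ is the indicator of a closed convex set, namely the Euclidean ball $B_\delta(f) \defeq \{u \colon \|u - f\|_2 \leq \delta\}$. By the very definition of the proximal map,
\[
\prox_{\tau F}(u^\diamond) = \argmin_u \left( F(u;f) + \frac{1}{2\tau}\|u - u^\diamond\|_2^2 \right) = \argmin_{u \in B_\delta(f)} \|u - u^\diamond\|_2^2,
\]
which is the metric projection of $u^\diamond$ onto $B_\delta(f)$, independent of $\tau>0$. The task therefore reduces to computing this projection explicitly, and the claim is just the standard formula for projection onto a ball.

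I would split into two cases. If $\|u^\diamond - f\|_2 \leq \delta$, then $u^\diamond$ already lies in $B_\delta(f)$ and the objective $\|u - u^\diamond\|_2^2$ is minimized (trivially, at value zero) by $u = u^\diamond$, giving the first branch of \eqref{eq: prox F}. Otherwise, $\|u^\diamond - f\|_2 > \delta$ and by strict convexity the minimizer is unique and must lie on the boundary $\{u \colon \|u - f\|_2 = \delta\}$. Parametrising the line from $f$ through $u^\diamond$ as $u(t) = f + t(u^\diamond - f)$, the boundary is hit at $t = \delta/\|u^\diamond - f\|_2 \in (0,1)$, which gives the candidate
\[
\hat u = f + \frac{\delta}{\|u^\diamond - f\|_2}(u^\diamond - f).
\]

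To confirm this is indeed the minimizer, I would use the reverse triangle inequality: for any $u \in B_\delta(f)$,
\[
\|u - u^\diamond\|_2 \geq \|u^\diamond - f\|_2 - \|u - f\|_2 \geq \|u^\diamond - f\|_2 - \delta,
\]
with equality in both inequalities precisely when $u = \hat u$ (equality in the reverse triangle inequality requires $u$ to lie on the segment from $f$ to $u^\diamond$, and equality in the second requires $\|u - f\|_2 = \delta$). Hence $\hat u$ attains the minimum, yielding the second branch of \eqref{eq: prox F}.

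There is no real obstacle here: the whole argument is a textbook computation of projection onto a Euclidean ball and follows entirely from the definition of $\prox$ together with the reverse triangle inequality. The only thing worth being careful about is noting that the step size $\tau$ plays no role, because scaling an indicator function by any positive constant leaves it unchanged.
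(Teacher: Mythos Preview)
Your proof is correct and follows the same approach as the paper: both recognise that the proximal map of the indicator $F$ reduces to the metric projection onto the ball $B_\delta(f)$ and then evaluate that projection case by case. The paper's own proof is actually more terse than yours---it simply states the projection formula without the reverse-triangle-inequality justification---so your argument is a faithful, slightly more detailed version of the same computation.
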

\begin{proof}
The proof is straightforward and it is based on a simple projection onto the constraint:
\[
\begin{aligned}
    \prox_{\tau F}(u^\diamond) 
    &= 
    \argmin_u \chi_{\{\norm{\blank-f}_2\leq \delta\}}(u) + \frac{1}{2\tau}\norm{u-u^\diamond}_2^2\\
    &= 
    \argmin_{u:\norm{u-f}_2\leq \delta} \frac{1}{2\tau}\norm{u-u^\diamond}_2^2=
    \begin{dcases}
    u^\diamond &\text{if } \norm{u^\diamond-f}_2\leq \delta,\\
    f+\frac{u^\diamond-f}{\norm{u^\diamond-f}_2}\delta &\text{if } \norm{u^\diamond-f}_2 > \delta.
    \end{dcases}
\end{aligned}
\]
\end{proof}

As a stopping criterion for the iterations in \eqref{eq: PDHG}, we compute the difference between two iterates of our Primal-Dual Algorithm as it is done in~\cite{goldstein2015adaptive}: 
\begin{equation}
\texttt{residual} := \frac{1}{M\cdot N}\left(\left|\frac{u^k - u^{k+1} - \tau\left(\grad^\ast (\pbold^k- \pbold^{k+1})\right)}{\tau}\right| + \left|\frac{\pbold^k - \pbold^{k+1} - \sigma \left(\grad (u^k - \overline{u}^{k+1})\right)}{\sigma}\right|\right).
\label{eq: residual}
\end{equation}

Now that the proximal maps of $R^\ast$ and $F$ are available, we have all the ingredients for the Primal-Dual Hybrid Gradient (PDHG) scheme in \ref{eq: PDHG}, detailed in Algorithm \ref{alg: PDHG}. The source code is available online\footnote{The MATLAB code is freely available at \url{https://github.com/simoneparisotto}}.

\begin{algorithm*}[tb]\noindent 
\SetAlgoLined\footnotesize
\caption{PDHG: Primal Dual Hybrid Gradient scheme for solving \eqref{eq: saddle-point problem}}
\label{alg: PDHG}
\SetKwData{maxiter}{maxiter}
\SetKwData{tol}{tol}
\SetKwData{residual}{residual}
\SetKwData{prox}{prox}
\SetKwData{colourchannel}{colour channel}
\SetKwInOut{Input}{Input}
\SetKwInOut{Output}{Output}
\SetKwInOut{Parameters}{Parameters}
\SetKwProg{Fn}{Function}{:}{}
\SetKwFunction{PDHG}{PDHG\_TVpw}
\Input{a noisy image $f$ of size $M\times N$, an estimation $\gamma$ of $\norm{\grad u}_2$, a bound $\delta>0$ for the $L^2$-norm;}
\Output{the denoised image $u$;}
\Parameters{maximum number of iterations (\maxiter), exit tolerance for the residual (\tol), $\sigma,\tau>0$ such that $\sigma\tau L^2<1$, with $L^2=8$.}
\BlankLine
\Fn{\PDHG}{
\BlankLine
$u^0=\overline{u}^0=f, \pbold^0=\Kcal u^0$\tcp*{Initialisation}
\BlankLine
\For{$k=1,\dots,\maxiter$}
{
\BlankLine
\tcp{Dual problem}
$\pbold^\diamond = \pbold^k + \sigma \Kcal \overline{u}^k$\;
$\pbold^{k+1} = 
\begin{dcases}
0 &\text{if } \norm{\pbold^\diamond}_2 \leq \sigma\gamma,\\
\norm{\pbold^\diamond}_2^{-1}&\text{if } \norm{\pbold^\diamond}_2\geq 1 + \sigma \gamma\\
1-\sigma\gamma \norm{\pbold^\diamond}_2^{-1} &\text{otherwise}.
\end{dcases}
$\tcp*{$\prox_{\sigma R^\ast}(\pbold^\diamond)$, see \eqref{eq: prox R ast}}
\BlankLine
\tcp{Primal problem}
$u^\diamond = u^k-\tau\Kcal\pbold^{k+1}$\;
$u^{k+1} =     
\begin{dcases}
u^\diamond &\text{if } \norm{u^\diamond-f}_2\leq \delta,\\
f+\frac{u^\diamond-f}{\norm{u^\diamond-f}_2}\delta &\text{if } \norm{u^\diamond-f}_2 > \delta.
\end{dcases}
$
\tcp*{$\prox_{\tau F}$, see \eqref{eq: prox F}}
\BlankLine
\tcp{Extrapolation}
$\overline{u}^{k+1} = u^{k+1} + \theta (u^{k+1}-u^k)$\;
\BlankLine
\tcp{Computation of the residual and exit condition}
$\residual = \frac{1}{M\cdot N}\left(\left|\frac{u^k - u^{k+1} - \tau\left(\Kcal^\ast (\pbold^k- \pbold^{k+1})\right)}{\tau}\right| + \left|\frac{\pbold^k - \pbold^{k+1} - \sigma \left(\Kcal (u^k - \overline{u}^{k+1})\right)}{\sigma}\right|\right)$ \;
\IfThen{
$\residual \leq \tol$
}{
\Break}\;
}
$u^\ast = u^{k+1}$\;
}
\Return
\end{algorithm*}

\paragraph{Discretisation.}
In the discrete setting, $\Omega$ is an imaging domain, i.e.\ a rectangular grid of $M\times N$ pixels, while $u$ denotes the grey-scale image of height $M$ and width $N$ pixels, defined over $\Omega$ and taking values in the intensity range $[0,255]$. 
The scalar value $u_{i,j}$ is associated with the intensity value of the image in the position $(i,j)$ of the imaging domain.
To generate the differential operator $\grad$ and its adjoint $\grad^\ast=-\div$, we use the forward finite difference scheme with Neumann boundary conditions. In particular, $(\grad u)_{i,j}=(\partial_1 u, \partial_2 u)_{i,j}$ reads as follows
\[
(\partial_1 u)_{i,j} = 
\begin{dcases}
\frac{u_{i+1,j}-u_{i,j}}{h} &\text{if } i<M,\\
0 &\text{if } i=M,
\end{dcases}
\quad
\text{and}
\quad
(\partial_2 u)_{i,j} = 
\begin{dcases}
\frac{u_{i,j+1}-u_{i,j}}{h} &\text{if } j<N,\\
0 &\text{if } j=N.
\end{dcases}
\]
The divergence $(\div \pbold)_{i,j}$ is defined for the auxiliary variable $\pbold=(p_1,p_2)$ as follows:
\[
(\div \pbold)_{i,j}
=
\begin{dcases}
\frac{(p_1)_{i,j}}{h} &\text{if } i=1,\\
\frac{(p_1)_{i,j}-(p_1)_{i-1,j}}{h} &\text{if } i=(1,M),\\
-\frac{(p_1)_{i-1,j}}{h} &\text{if } i=M,
\end{dcases}
+
\begin{dcases}
\frac{(p_2)_{i,j}}{h} &\text{if } j=1,\\
\frac{(p_2)_{i,j}-(p_2)_{i,j-1}}{h} &\text{if } j=(1,N),\\
\frac{(p_2)_{i,j-1}}{h} &\text{if } j=N.
\end{dcases}
\]


\section{Numerical results}\label{sec:num_exp}
In this section, we compare the performance of three regulraisers: $\TV$, $\TGV$ and $\TVpwL$ in problem~\eqref{eq:resid_meth}. We use the primal dual scheme introduced earlier as well as, for the sake of comparison, CVX (a package for specifying and solving convex programs \cite{cvx,gb08}, used here with \texttt{default} precision). To generate the differential operator for the use in CVX, we use the DIFFOP package~\cite{diffop}.


\paragraph{Dataset.}
Our dataset is composed of several natural grey-scale images of size $256\times 256$ pixels displayed in Figure \ref{fig: dataset}. The images are taken from ImageNet (\url{http://http://www.image-net.org/}) and from \url{http://decsai.ugr.es/cvg/dbimagenes/g512.php} and are free to use. In our experiments we add $10\%$ and $20\%$  additive Gaussian noise to our images, i.e.\ the noisy data $f$ is given by
\[
f(\xbold) = u_{\texttt{GT}}(\xbold) + n(\xbold),
\]
where $u_{\texttt{GT}}$ is the ground truth image, $n(\xbold)$ is Gaussian  noise of zero mean and variance 0.1*255 or 0.2*255 for $10\%$ and $20\%$, respectively. The intensity range of ground truth images is $[0,255]$.

\begin{figure}[tbhp]
\centering
\begin{subfigure}[t]{0.15\textwidth}\centering
\includegraphics[width=1\textwidth]{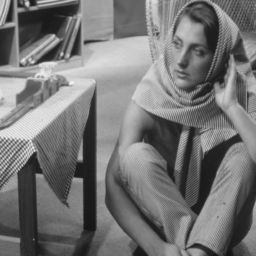}
\caption{\texttt{barbara}}
\end{subfigure}
\begin{subfigure}[t]{0.15\textwidth}\centering
\includegraphics[width=1\textwidth]{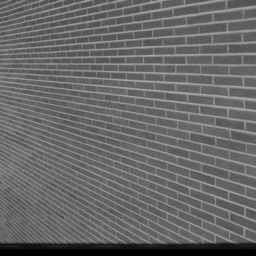}
\caption{\texttt{brickwall}}
\end{subfigure}
\begin{subfigure}[t]{0.15\textwidth}\centering
\includegraphics[width=1\textwidth]{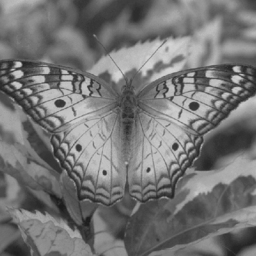}
\caption{\texttt{butterfly}}
\end{subfigure}
\begin{subfigure}[t]{0.15\textwidth}\centering
\includegraphics[width=1\textwidth]{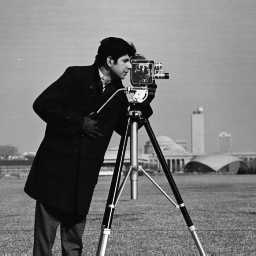}
\caption{\texttt{cameraman}}
\end{subfigure}
\begin{subfigure}[t]{0.15\textwidth}\centering
\includegraphics[width=1\textwidth]{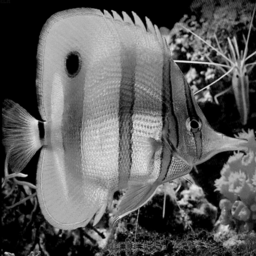}
\caption{\texttt{fish}}
\end{subfigure}
\begin{subfigure}[t]{0.15\textwidth}\centering
\includegraphics[width=1\textwidth]{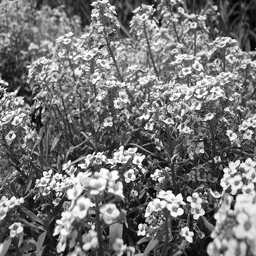}
\caption{\texttt{flowers}}
\end{subfigure}
\\
\begin{subfigure}[t]{0.15\textwidth}\centering
\includegraphics[width=1\textwidth]{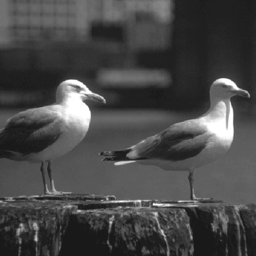}
\caption{\texttt{gull}}
\end{subfigure}
\begin{subfigure}[t]{0.15\textwidth}\centering
\includegraphics[width=1\textwidth]{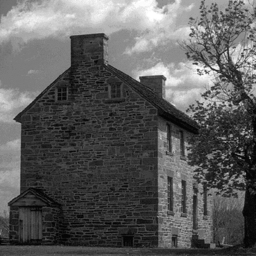}
\caption{\texttt{house}}
\end{subfigure}
\begin{subfigure}[t]{0.15\textwidth}\centering
\includegraphics[width=1\textwidth]{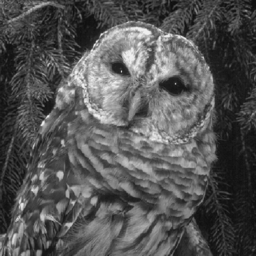}
\caption{\texttt{owl}}
\end{subfigure}
\begin{subfigure}[t]{0.15\textwidth}\centering
\includegraphics[width=1\textwidth]{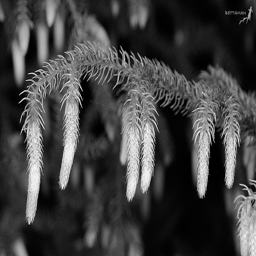}
\caption{\texttt{pine\_tree}}
\end{subfigure}
\begin{subfigure}[t]{0.15\textwidth}\centering
\includegraphics[width=1\textwidth]{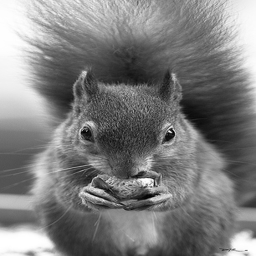}
\caption{\texttt{squirrel}}
\end{subfigure}
\begin{subfigure}[t]{0.15\textwidth}\centering
\includegraphics[width=1\textwidth]{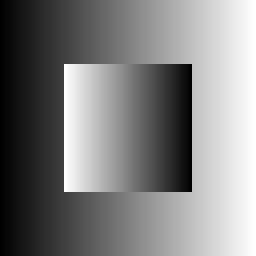}
\caption{\texttt{synthetic}}
\end{subfigure}
\caption{Our dataset of images with size $256\times 256$ pixels. Images are free to use. Images (f), (j) and (k) are from ImageNet (\url{http://http://www.image-net.org/}), other images can be downloaded from \url{http://decsai.ugr.es/cvg/dbimagenes/g512.php}.}
\label{fig: dataset}
\end{figure}

\paragraph{Parameter choice.}
The study of strategies of estimating the parameter $\gamma$ of $\TVpwL^\gamma$ is beyond the scope of our paper, which assumes that a good estimate of $\gamma$ has been already obtained. We will use the simple pipeline of estimating $\gamma$ based on overregularised $\TV$ reconstructions presented in~\cite{TVwpL_SSVM} without claiming its optimality. These reconstructions will be referred to as ``over-$\TV$''. To demonstrate the best possible performance of $\TVpwL^\gamma$ in the idealistic scenario of exact $\gamma$, we also estimate $\gamma$ using the magnitude (but not the direction) of the gradient of the ground truth image. These reconstructions will be referred to as ``GT''.

The pipeline from~\cite{TVwpL_SSVM} can be summarised as follows. We first denoise $f$ using the ROF model 
\begin{equation}
\widehat{u}=\argmin_u \lambda \TV(u) + \frac{1}{2}\norm{u-f}_2^2.
\label{eq: over-TV argmin}
\end{equation}
with a large parameter $\lambda>0$. We choose $\lambda=500$ and solve~\eqref{eq: over-TV argmin} with a standard Primal-Dual algorithm~\cite{chambolle2010first}. 
Once $\widehat{u}$ is available, we compute the residual $r \defeq f-\widehat{u}$ and smooth it with a Gaussian filter with kernel $K_\rho$ of standard deviation $\rho>0$, in our experiments $\rho=2$, to obtain $r_\rho \defeq K_\rho \ast r$. The parameter $\gamma$ is estimated from the filtered residual as $\gamma = \abs{\grad r_\rho}$, where $\abs{\cdot}$ denotes the pointwise $2$-norm.

For the $\TGV$ denoising problem
\begin{equation*}
    \min_{\substack{u \in \BV(\Omega) \\ w \in \BD (\Omega)}} \norm{Du-w}_\M + \beta \norm{w}_\M \quad \text{s.t. $\norm{u-f}_2 \leq \delta$},
\end{equation*}
where $\BD(\Omega)$ is the space of vector fields
of bounded deformation on $\Omega$~\cite{bredies2009tgv}, we choose $\beta = 1.25$, which is in the optimal range $[1,1.5]$ reported in~\cite{DlosR_CS_TV_bilevel:2018}.

\paragraph{A synthetic image.}
As a toy example, in Figure \ref{fig: synthetic} we show the results for a synthetic image corrupted with $10\%$ of Gaussian noise. This is image is piecewise-affine, making it ideal for second order $\TGV$. The results for $\TV$ and $\TGV$ are shown in Figures~\ref{fig: synthetic TV} and~\ref{fig: synthetic TGV}, respectively. In Figures~\ref{fig: synthetic over-tv} -- \ref{fig: synthetic TVpwL_over_TV} we show the pipeline for estimating $\gamma$ as described above and the final result obtained using $\TVpwL$ with this $\gamma$. We notice the that staircasing in the $\TVpwL$ reconstruction (Figure~\ref{fig: synthetic TVpwL_over_TV}) is significantly reduced compared to the $\TV$ reconstruction (Figure~\ref{fig: synthetic TV}). In fact, the $\TVpwL$ reconstruction is rather close to the one obtained using $\TGV$ (Figure~\ref{fig: synthetic TGV}). If we compare the cpu time needed to compute these reconstructions, we notice that $\TGV$ is about $5$ times slower. In numerical experiments with natural images (that will follow) we will see that $\TVpwL$ can be an order of magnitude faster than $\TGV$.

In order to show the best performance that $\TVpwL$ could obtain with the best possible information about the norm of the gradient, in Figure~\ref{fig: synthetic TVpwL_GT} we demonstrate the results obtained using $\gamma$ estimated from the ground truth.

\paragraph{Convergence.}
In Figure \ref{fig: PD gap} we report the ``Primal-Dual residual'' (\texttt{residual}) defined in \ref{eq: residual} for the case of the \texttt{synthetic} results in Figure \ref{fig: synthetic}.
We observe that for all regularisers the decay of the $\texttt{residual}$ (in red) is sub-linear when the fidelity constraint is far from being an equality, i.e.\ $\frac{\delta}{\norm{u-f}_2} \gg 1$; once this constraint gets close to being an equality, i.e.\ $\frac{\delta}{\norm{u-f}_2} \to 1^+$, the decay turns out to have the expected second-order behaviour. 

\begin{figure}[tbh]
    \centering
    \begin{subfigure}[t]{0.245\textwidth}
    \centering
    \includegraphics[width=0.605\textwidth]{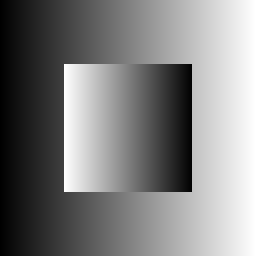}
    \caption{Original (GT)}
    \end{subfigure}
    \begin{subfigure}[t]{0.245\textwidth}
    \centering
    \includegraphics[width=0.605\textwidth]{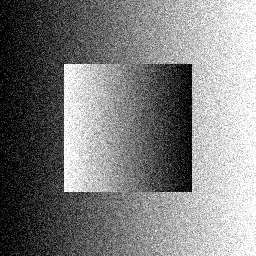}
    \caption{Noisy\\($10\%$ Gauss.\ noise)}
    \label{fig: synthetic noise}
    \end{subfigure}
    \begin{subfigure}[t]{0.245\textwidth}
    \centering
    \includegraphics[width=0.605\textwidth]{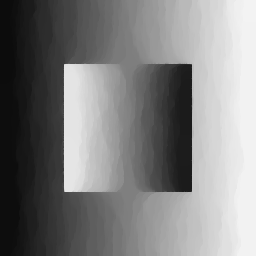}
    \caption{$\TV$\\SSIM: 0.945, PSNR: 33.44\\cputime: 14.33 s.}
    \label{fig: synthetic TV}
    \end{subfigure}
    \begin{subfigure}[t]{0.245\textwidth}
    \centering
    \includegraphics[width=0.605\textwidth]{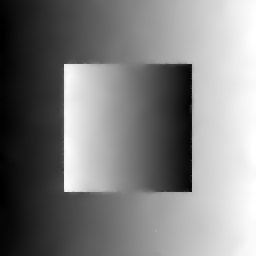}
    \caption{$\TGV^2$\\SSIM: 0.987, PSNR: 37.99\\cputime: 115.89}
    \label{fig: synthetic TGV}
    \end{subfigure}
    \\
    \begin{subfigure}[t]{0.245\textwidth}
    \centering
    \includegraphics[width=0.605\textwidth]{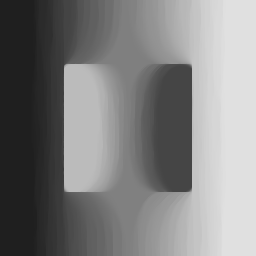}
    \caption{$\widehat{u}$ (over-TV)}
    \label{fig: synthetic over-tv}
    \end{subfigure}
    \begin{subfigure}[t]{0.245\textwidth}
    \centering
    \includegraphics[width=0.605\textwidth]{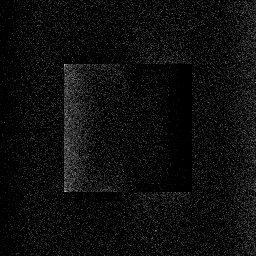}
    \caption{Residual $r$\\(from over-TV)}
    \end{subfigure}
    \begin{subfigure}[t]{0.245\textwidth}
    \centering
    \includegraphics[width=0.605\textwidth]{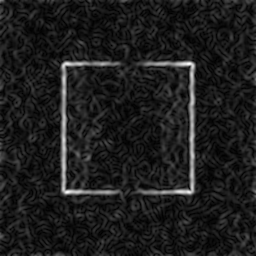}
    \caption{$\gamma$ (rescaled)\\(from over-TV)}
    \end{subfigure}
    \begin{subfigure}[t]{0.245\textwidth}
    \centering
    \includegraphics[width=0.605\textwidth]{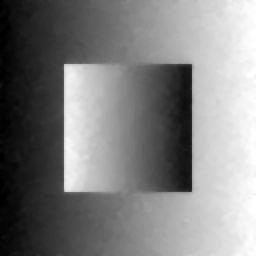}
    \caption{$\TVpwL$ (over-TV)\\SSIM: 0.953, PSNR: 32.63\\cputime: 24.19 s.}
    \label{fig: synthetic TVpwL_over_TV}
    \end{subfigure}
    \\
    \begin{subfigure}[t]{0.245\textwidth}
    \hfill
    \end{subfigure}
    \begin{subfigure}[t]{0.245\textwidth}
    \centering
    \includegraphics[width=0.605\textwidth]{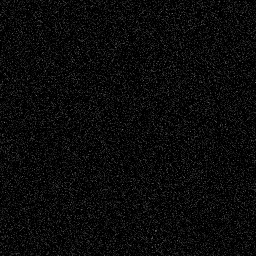}
    \caption{Residual $r$\\(from GT)}
    \end{subfigure}
    \begin{subfigure}[t]{0.245\textwidth}
    \centering
    \includegraphics[width=0.605\textwidth]{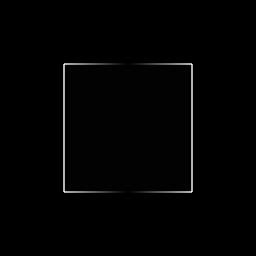}
    \caption{$\gamma$ (rescaled)\\(from GT)}
    \end{subfigure}
    \begin{subfigure}[t]{0.245\textwidth}
    \centering
    \includegraphics[width=0.605\textwidth]{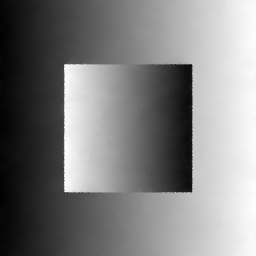}
    \caption{$\TVpwL$ (GT)\\SSIM: 0.980, PSNR: 34.11\\cputime: 14.13 s.}
    \label{fig: synthetic TVpwL_GT}
    \end{subfigure}
    \caption{The \texttt{synthetic} image. The full denoising workflow of Figure \ref{fig: synthetic noise} is displayed: in the second row with $\gamma$ computed using an overregularised $\TV$ reconstruction and in the trid row using the ground-truth $\gamma$.}
    \label{fig: synthetic}
    \begin{subfigure}[t]{0.245\textwidth}
    \centering
    \includegraphics[width=1\textwidth]{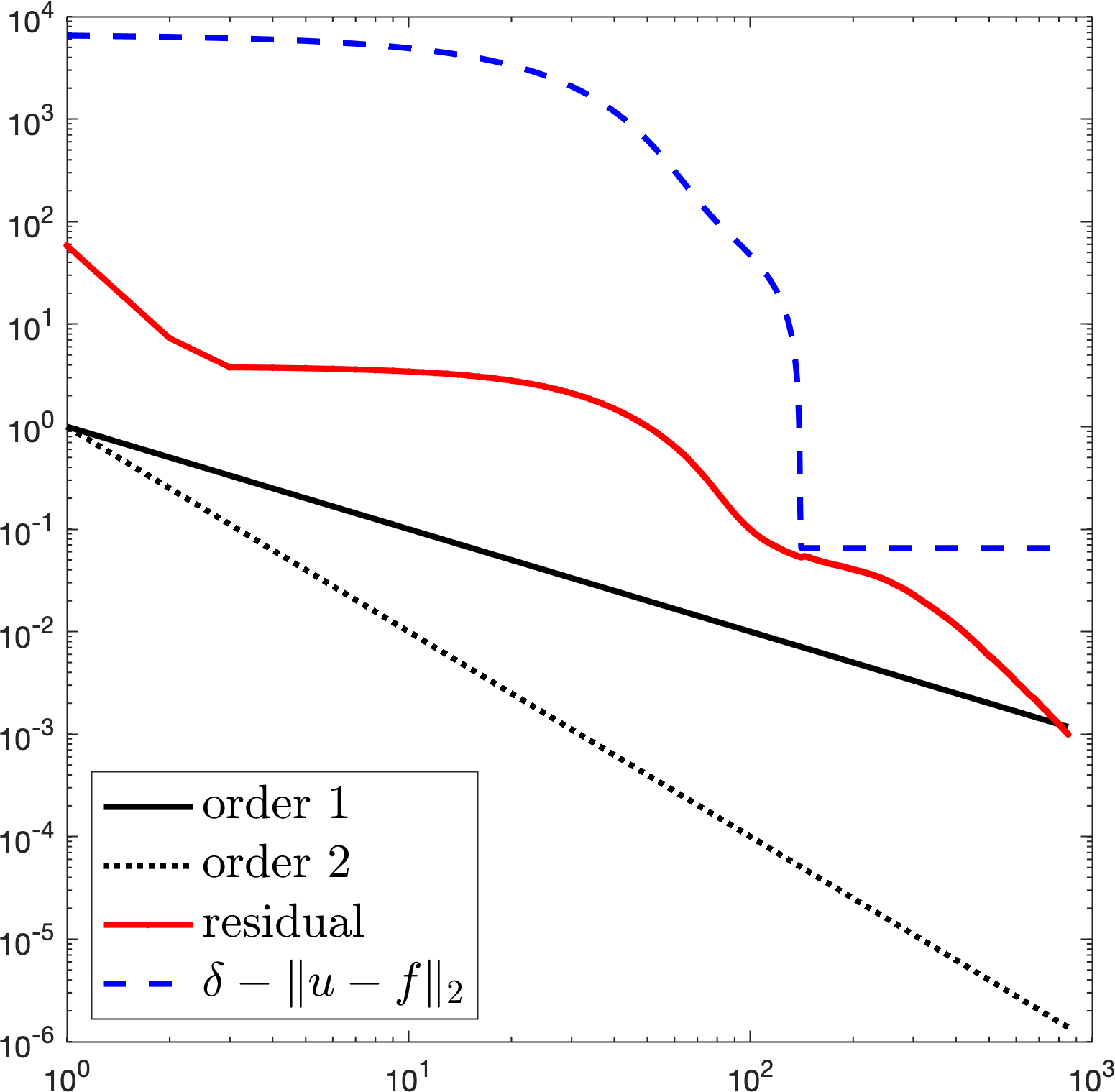}
    \caption{$\TV$}
    \end{subfigure}
    \begin{subfigure}[t]{0.245\textwidth}
    \centering
    \includegraphics[width=1\textwidth]{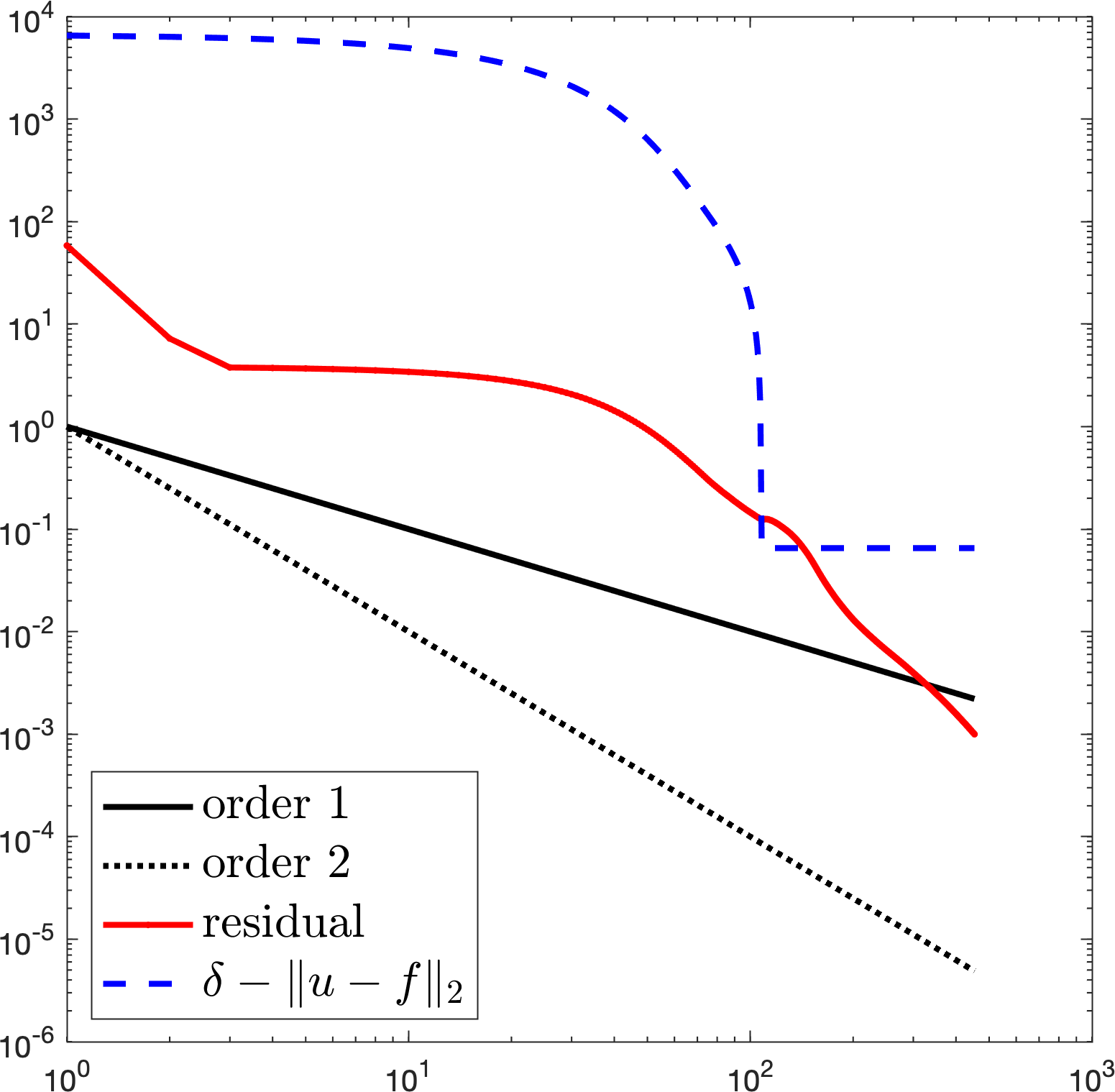}
    \caption{$\TVpwL$ (GT)}
    \end{subfigure}
    \begin{subfigure}[t]{0.245\textwidth}
    \centering
    \includegraphics[width=1\textwidth]{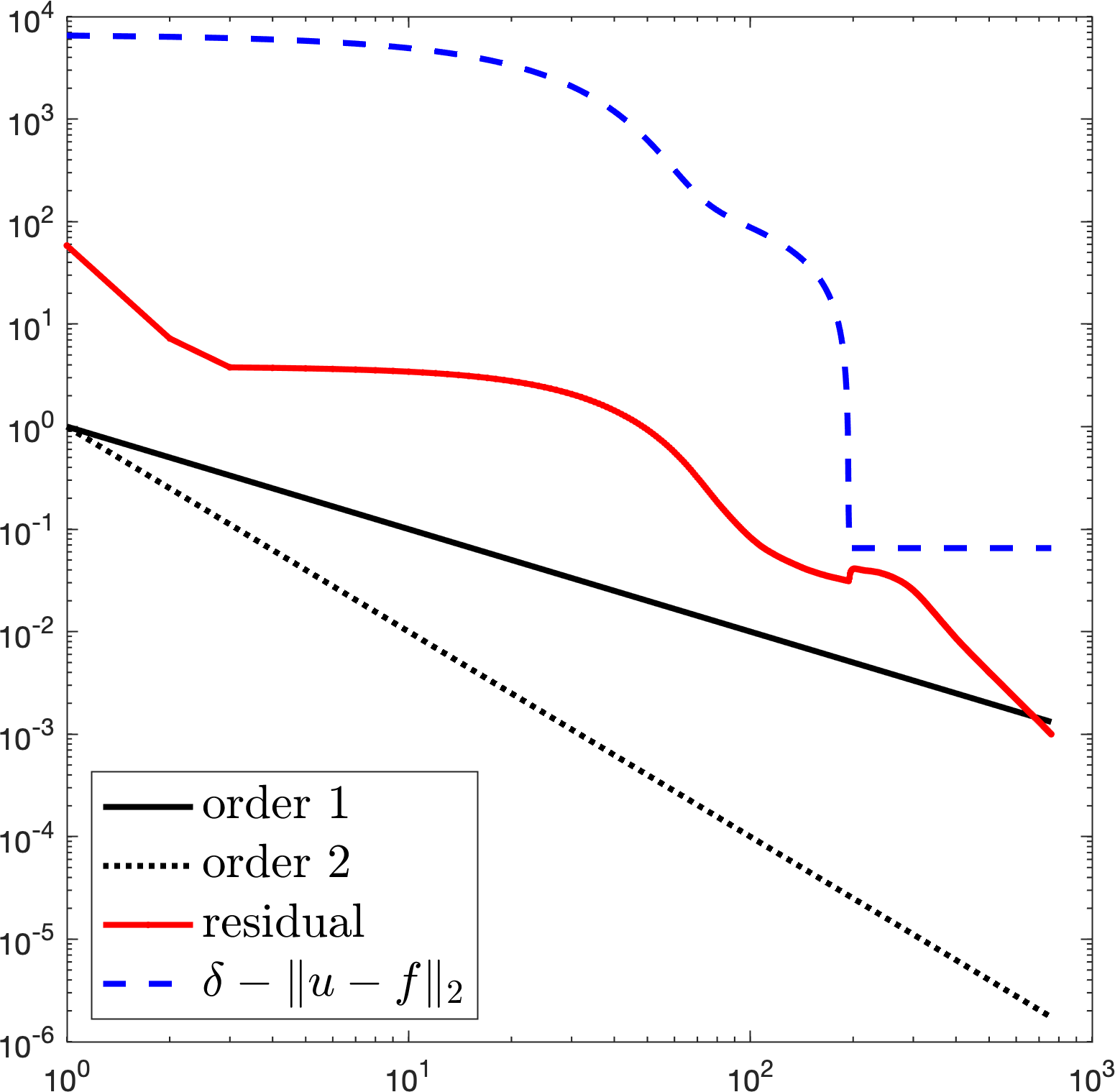}
    \caption{$\TVpwL$ (over-TV)}
    \end{subfigure}
    \begin{subfigure}[t]{0.245\textwidth}
    \centering
    \includegraphics[width=1\textwidth]{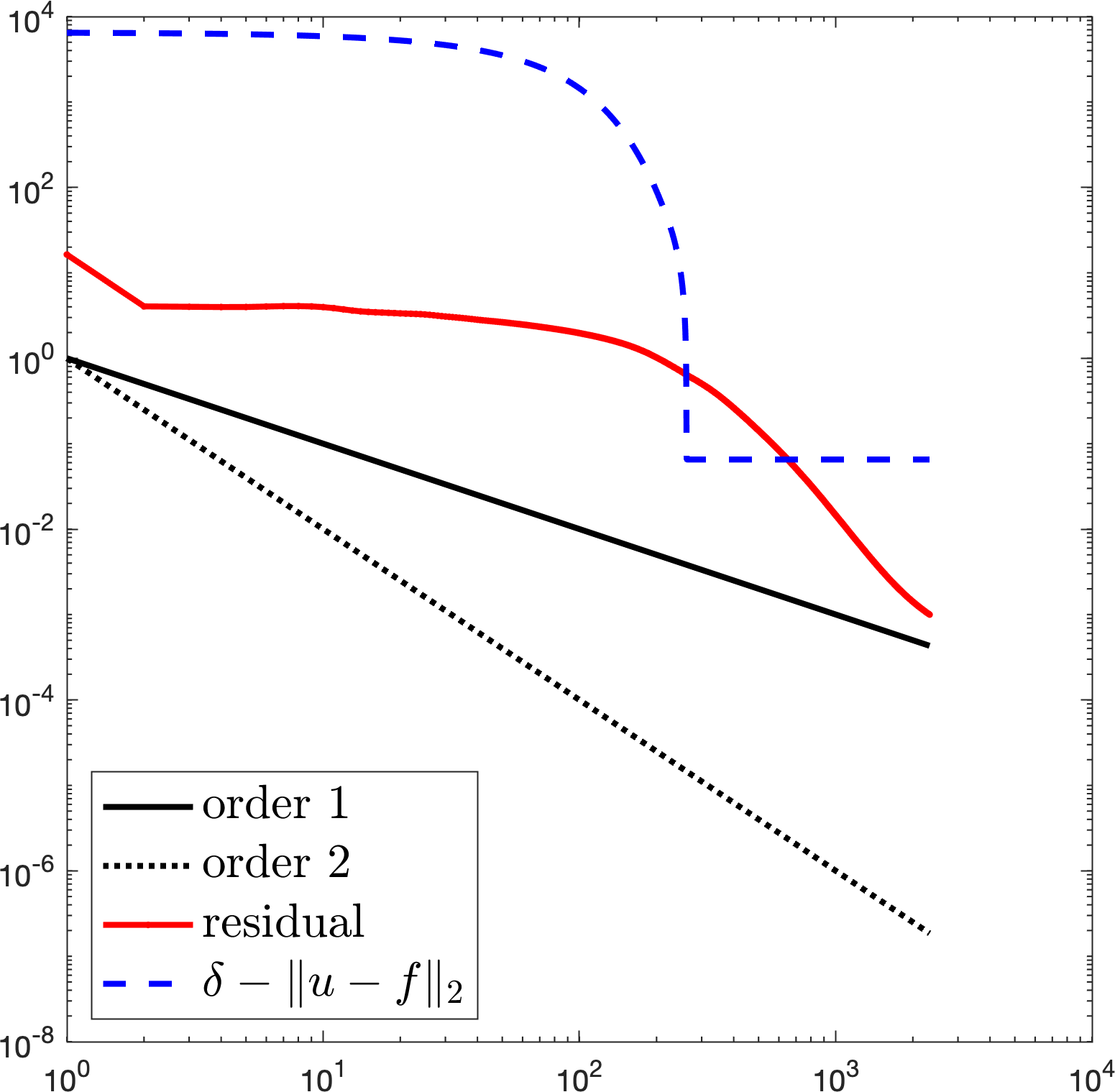}
    \caption{$\TGV^2$}
    \end{subfigure}
    \caption{Loglog plot decay of the residual (in red) and gap constraint $\delta-\norm{u-f}_2$ (in dashed blue) for the \texttt{synthetic} image in Figure \ref{fig: synthetic} (corrupted with 10\% Gaussian noise); in continuous black order 1 and dotted black order 2 of decay. The exit tolerance for the residual is set to $\mathtt{tol}=\texttt{1e-03}$.}
    \label{fig: PD gap}
\end{figure}

\paragraph{Real images.}

In this section, we compare the performances of the PDHG Algorithm \ref{alg: PDHG} (and exit condition $\texttt{tol}=\texttt{1e-3}$ in the residual) with respect to CVX.  
All our experiments are carried out in MATLAB 2019a, on a MacBook Pro 2019 (2.4 GHz Intel Core i5, RAM 16 GB 2133 MHz LPDDR3). 
Quantitative results (the values of $\SSIM$, $\PSNR$ and cpu time) are reported in Table \ref{tab: quantitative results 10 Gaussian}. 

In Figure \ref{fig: computation of gamma} we report the estimation of $\gamma$ using either the over-regularised $\TV$ reconstruction or the ground truth for a selection of real images from our dataset in Figure \ref{fig: dataset} and for different noise levels ($10\%$ vs.\ $20\%$).

\begin{figure}[tbh]
    \centering
\begin{subfigure}[t]{0.15\textwidth}
    \centering
    \includegraphics[width=1\textwidth]{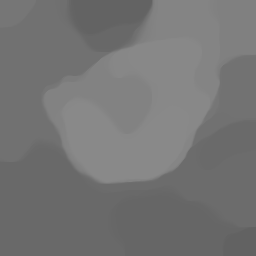}
    \caption{$\widehat{u}$ (over-TV)\\ from $10\%$ noise}
    \label{fig: TV_buttefly_10}
    \end{subfigure}
    \begin{subfigure}[t]{0.15\textwidth}
    \centering
    \includegraphics[width=1\textwidth]{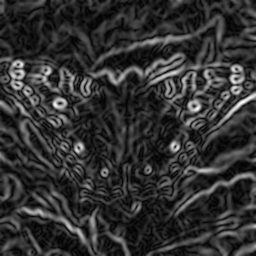}
    \caption{$\gamma$ (over-TV)\\ from $10\%$ noise}
    \label{fig: gamma_buttefly_10}
    \end{subfigure}
    \begin{subfigure}[t]{0.15\textwidth}
    \centering
    \includegraphics[width=1\textwidth]{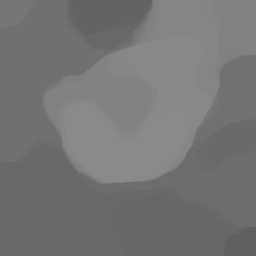}
    \caption{$\widehat{u}$ (over-TV)\\from 20\% noise}
    \label{fig: TV_buttefly_20}
    \end{subfigure}
    \begin{subfigure}[t]{0.15\textwidth}
    \centering
    \includegraphics[width=1\textwidth]{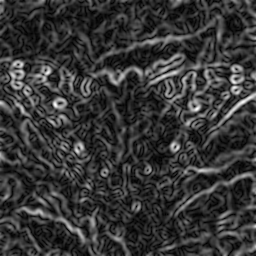}
    \caption{$\gamma$ (over-TV)\\from 20\% noise}
    \label{fig: gamma_buttefly_20}
    \end{subfigure}
    \begin{subfigure}[t]{0.15\textwidth}
    \centering
    \includegraphics[width=1\textwidth]{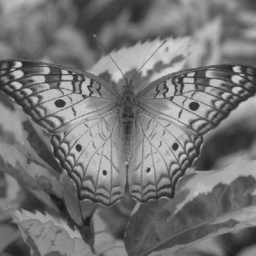}
    \caption{$\widehat{u}$ (GT)}
    \label{fig: butterfly_GT}
    \end{subfigure}
    \begin{subfigure}[t]{0.15\textwidth}
    \centering
    \includegraphics[width=1\textwidth]{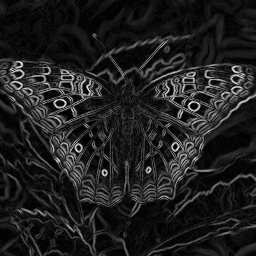}
    \caption{$\gamma$ from GT}
    \label{fig: gamma_buttefly_GT}
    \end{subfigure}
    \\    
    \begin{subfigure}[t]{0.15\textwidth}
    \centering
    \includegraphics[width=1\textwidth]{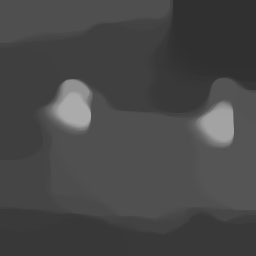}
    \caption{$\widehat{u}$ (over-TV)\\from 10\% noise}
    \label{fig: TV_gull_10}
    \end{subfigure}
    \begin{subfigure}[t]{0.15\textwidth}
    \centering
    \includegraphics[width=1\textwidth]{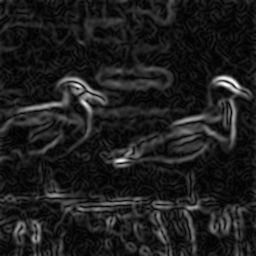}
    \caption{$\gamma$ (over-TV)\\from 10\% noise}
    \label{fig: gamma_gull_10}
    \end{subfigure}
    \begin{subfigure}[t]{0.15\textwidth}
    \centering
    \includegraphics[width=1\textwidth]{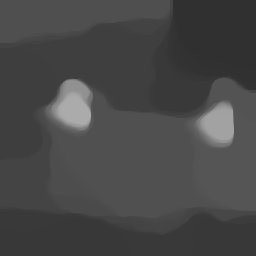}
    \caption{$\widehat{u}$ (over-TV)\\from 20\% noise}
    \label{fig: TV_gull_20}
    \end{subfigure}
    \begin{subfigure}[t]{0.15\textwidth}
    \centering
    \includegraphics[width=1\textwidth]{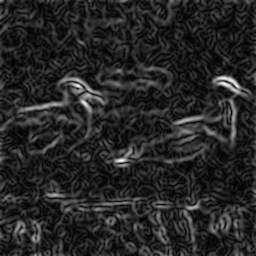}
    \caption{$\gamma$ (over-TV)\\from 20\% noise}
    \label{fig: gamma_gull_20}
    \end{subfigure}
    \begin{subfigure}[t]{0.15\textwidth}
    \centering
    \includegraphics[width=1\textwidth]{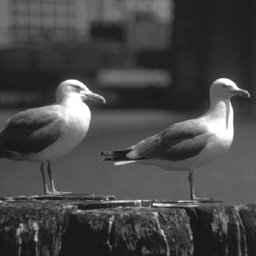}
    \caption{$\widehat{u}$ (GT)}
    \label{fig: gull_GT}
    \end{subfigure}
    \begin{subfigure}[t]{0.15\textwidth}
    \centering
    \includegraphics[width=1\textwidth]{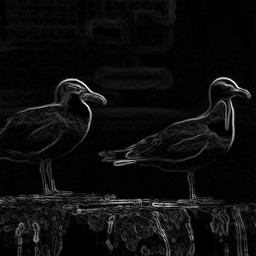}
    \caption{$\gamma$ from GT}
    \label{fig: gamma_gull_GT}
    \end{subfigure}
    \\
    \begin{subfigure}[t]{0.15\textwidth}
    \centering
    \includegraphics[width=1\textwidth]{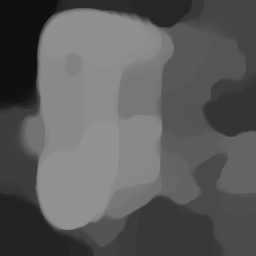}
    \caption{$\widehat{u}$ (over-TV)\\from 10\% noise}
    \label{fig: TV_fish_10}
    \end{subfigure}
    \begin{subfigure}[t]{0.15\textwidth}
    \centering
    \includegraphics[width=1\textwidth]{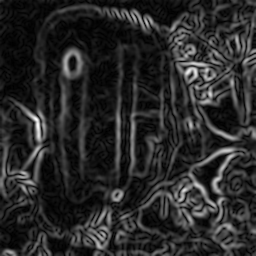}
    \caption{$\gamma$ (over-TV)\\from 10\% noise}
    \label{fig: gamma_fish_10}
    \end{subfigure}
    \begin{subfigure}[t]{0.15\textwidth}
    \centering
    \includegraphics[width=1\textwidth]{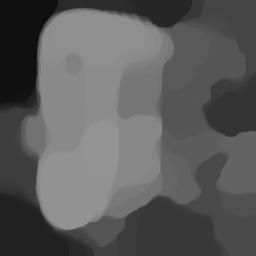}
    \caption{$\widehat{u}$ (over-TV)\\from 20\% noise}
    \label{fig: TV_fish_20}
    \end{subfigure}
    \begin{subfigure}[t]{0.15\textwidth}
    \centering
    \includegraphics[width=1\textwidth]{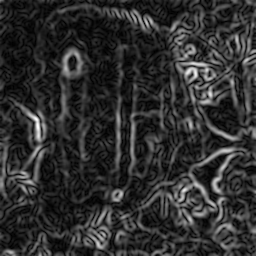}
    \caption{$\gamma$ (over-TV)\\from 20\% noise}
    \label{fig: gamma_fish_20}
    \end{subfigure}
    \begin{subfigure}[t]{0.15\textwidth}
    \centering
    \includegraphics[width=1\textwidth]{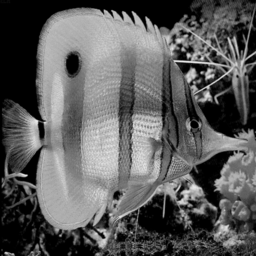}
    \caption{$\widehat{u}$ (GT)}
    \label{fig: fish_GT}
    \end{subfigure}
    \begin{subfigure}[t]{0.15\textwidth}
    \centering
    \includegraphics[width=1\textwidth]{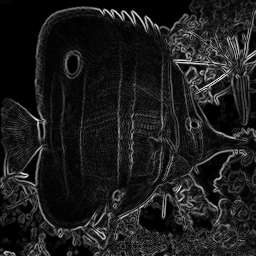}
    \caption{$\gamma$ from GT}
    \label{fig: gamma_fish_GT}
    \end{subfigure}
    \caption{Over-regularised $\TV$ solutions (\subref{fig: TV_buttefly_10},\subref{fig: TV_buttefly_20},\subref{fig: TV_gull_10},\subref{fig: TV_gull_20},\subref{fig: TV_fish_10} and \subref{fig: TV_fish_20}) and estimated $\gamma$ (\subref{fig: gamma_buttefly_10}, \subref{fig: gamma_buttefly_20}, \subref{fig: gamma_gull_10}, \subref{fig: gamma_gull_20}, \subref{fig: gamma_fish_10} and \subref{fig: gamma_fish_20}; rescaled for better visualisation) are compared with $\gamma$ obtained from the ground truth (ground truth shown in \subref{fig: butterfly_GT}, \subref{fig: gull_GT} and \subref{fig: fish_GT}; $\gamma$ shown in \subref{fig: gamma_buttefly_GT}, \subref{fig: gamma_gull_GT} and \subref{fig: gamma_fish_GT}).}
    \label{fig: computation of gamma}
\end{figure}

In Figures \ref{fig: gallery 10 noise} and \ref{fig: gallery 20 noise} we display  reconstructions obtained with Algorithm \ref{alg: PDHG} from images corrupted with $10\%$ or $20\%$ Gaussian noise, respectively. 
Total Variation (Figures~\ref{fig: butterly TV} -- \ref{fig: fish TV} for $10\%$ noise) produces the expected staircasing, which is significantly reduced with $\TVpwL$ (with $\gamma$ obtained using an overregularised $\TV$ reconstruction), as demonstrated in Figures~\ref{fig: butterfly TVpwL} -- \ref{fig: fish TVpwL}. Reconstructions obtained with $\TGV$ (Figures~\ref{fig: butterfly TGV} -- \ref{fig: fish TGV}) are slightly smoother; the values of $\SSIM$ and $\PSNR$ are sightly higher, but the computational time is up to an order a magnitude larger (cf., e.g., barbara, cameraman, fish, flowers). Supplied with a good a priori estimate of $\gamma$, $\TVpwL$ produces reconstructions that have much more details and a much smaller lost of contrast than other regularisers (Figures~\ref{fig: butterfly TVpwL GT} -- \ref{fig: fish TVpwL GT}).


\begin{figure}[tbhp]
    \centering
    \begin{subfigure}[t]{0.15\textwidth}
    \centering
    \includegraphics[width=1\textwidth]{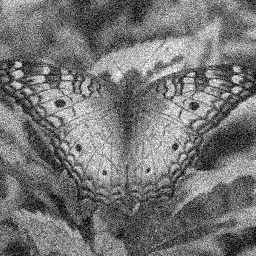}
    \caption{Noisy $f$}
    \end{subfigure}
    \begin{subfigure}[t]{0.15\textwidth}
    \centering
    \includegraphics[width=1\textwidth,trim=0px 128px 128px 0px, clip=true]{{./images_256/noise10/butterfly/u_noise}.png}
    \caption{Noisy $f$ (zoom)}
    \end{subfigure}
    \begin{subfigure}[t]{0.15\textwidth}
    \centering
    \includegraphics[width=1\textwidth]{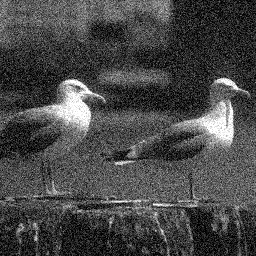}
    \caption{Noisy $f$}
    \end{subfigure}
    \begin{subfigure}[t]{0.15\textwidth}
    \centering
    \includegraphics[width=1\textwidth,trim=0px 128px 128px 0px, clip=true]{{./images_256/noise10/gull/u_noise}.png}
    \caption{Noisy $f$ (zoom)}
    \end{subfigure}
    \begin{subfigure}[t]{0.15\textwidth}
    \centering
    \includegraphics[width=1\textwidth]{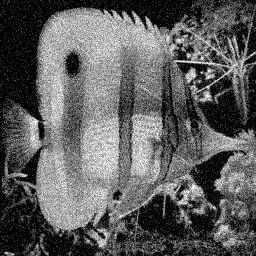}
    \caption{Noisy $f$}
    \end{subfigure}
    \begin{subfigure}[t]{0.15\textwidth}
    \centering
    \includegraphics[width=1\textwidth,trim=0px 128px 128px 0px, clip=true]{{./images_256/noise10/fish/u_noise}.png}
    \caption{Noisy $f$ (zoom)}
    \end{subfigure}
    \\
    \begin{subfigure}[t]{0.15\textwidth}
    \centering
    \includegraphics[width=1\textwidth]{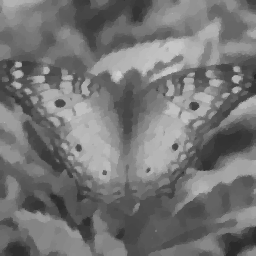}
    \caption{$\TV$}
    \label{fig: butterly TV}
    \end{subfigure}
    \begin{subfigure}[t]{0.15\textwidth}
    \centering
    \includegraphics[width=1\textwidth,trim=0px 128px 128px 0px, clip=true]{{./images_256/noise10/butterfly/u_TV_PDHG_SSIM0.7647_PSNR26.5521_cputime5.9}.png}
    \caption{$\TV$  (zoom)}
    \end{subfigure}
    \begin{subfigure}[t]{0.15\textwidth}
    \centering
    \includegraphics[width=1\textwidth]{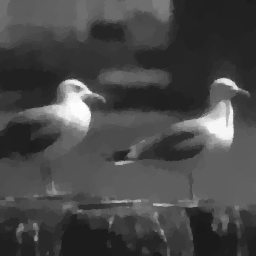}
    \caption{$\TV$}
    \end{subfigure}
    \begin{subfigure}[t]{0.15\textwidth}
    \centering
    \includegraphics[width=1\textwidth,trim=0px 128px 128px 0px, clip=true]{{./images_256/noise10/gull/u_TV_PDHG_SSIM0.84682_PSNR28.9895_cputime11.49}.png}
    \caption{$\TV$  (zoom)}
    \end{subfigure}
    \begin{subfigure}[t]{0.15\textwidth}
    \centering
    \includegraphics[width=1\textwidth]{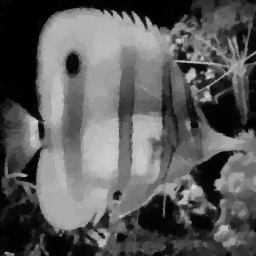}
    \caption{$\TV$}
    \end{subfigure}
    \begin{subfigure}[t]{0.15\textwidth}
    \centering
    \includegraphics[width=1\textwidth,trim=0px 128px 128px 0px, clip=true]{{./images_256/noise10/fish/u_TV_PDHG_SSIM0.72908_PSNR25.5025_cputime7.85}.png}
    \caption{$\TV$  (zoom)}
    \label{fig: fish TV}
    \end{subfigure}
    \\
    \begin{subfigure}[t]{0.15\textwidth}
    \centering
    \includegraphics[width=1\textwidth]{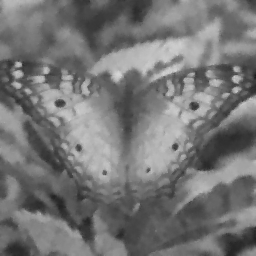}
    \caption{$\TVpwL$ (over-TV)}
    \label{fig: butterfly TVpwL}
    \end{subfigure}
    \begin{subfigure}[t]{0.15\textwidth}
    \centering
    \includegraphics[width=1\textwidth,trim=0px 128px 128px 0px, clip=true]{{./images_256/noise10/butterfly/u_TVpwL_PDHG_over_TV_SSIM0.7831_PSNR26.7345_cputime16.49}.png}
    \caption{$\TVpwL$\\(over-TV, zoom)}
    \end{subfigure}
    \begin{subfigure}[t]{0.15\textwidth}
    \centering
    \includegraphics[width=1\textwidth]{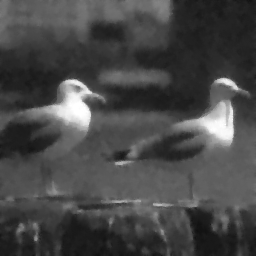}
    \caption{$\TVpwL$\\(over-TV)}
    \end{subfigure}
    \begin{subfigure}[t]{0.15\textwidth}
    \centering
    \includegraphics[width=1\textwidth,trim=0px 128px 128px 0px, clip=true]{{./images_256/noise10/gull/u_TVpwL_PDHG_over_TV_SSIM0.83929_PSNR28.6629_cputime17.17}.png}
    \caption{$\TVpwL$\\(over-TV, zoom)}
    \end{subfigure}
    \begin{subfigure}[t]{0.15\textwidth}
    \centering
    \includegraphics[width=1\textwidth]{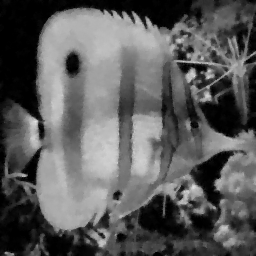}
    \caption{$\TVpwL$\\(over-TV)}
    \end{subfigure}
    \begin{subfigure}[t]{0.15\textwidth}
    \centering
    \includegraphics[width=1\textwidth,trim=0px 128px 128px 0px, clip=true]{{./images_256/noise10/fish/u_TVpwL_PDHG_over_TV_SSIM0.72052_PSNR25.4104_cputime14.69}.png}
    \caption{$\TVpwL$\\(over-TV, zoom)}
    \label{fig: fish TVpwL}
    \end{subfigure}
    \\
    \begin{subfigure}[t]{0.15\textwidth}
    \centering
    \includegraphics[width=1\textwidth]{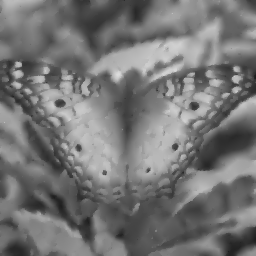}
    \caption{$\TGV^2$}
    \label{fig: butterfly TGV}
    \end{subfigure}
    \begin{subfigure}[t]{0.15\textwidth}
    \centering
    \includegraphics[width=1\textwidth,trim=0px 128px 128px 0px, clip=true]{{./images_256/noise10/butterfly/u_TGV_PDHG_SSIM0.8017_PSNR27.363_cputime82.48}.png}
    \caption{$\TGV^2$ (zoom)}
    \end{subfigure}
    \begin{subfigure}[t]{0.15\textwidth}
    \centering
    \includegraphics[width=1\textwidth]{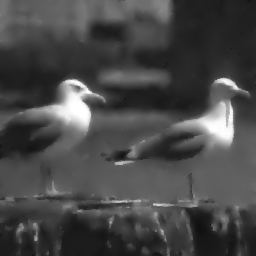}
    \caption{$\TGV^2$}
    \end{subfigure}
    \begin{subfigure}[t]{0.15\textwidth}
    \centering
    \includegraphics[width=1\textwidth,trim=0px 128px 128px 0px, clip=true]{{./images_256/noise10/gull/u_TGV_PDHG_SSIM0.8684_PSNR29.8027_cputime87.96}.png}
    \caption{$\TGV^2$ (zoom)}
    \end{subfigure}
     \begin{subfigure}[t]{0.15\textwidth}
    \centering
    \includegraphics[width=1\textwidth]{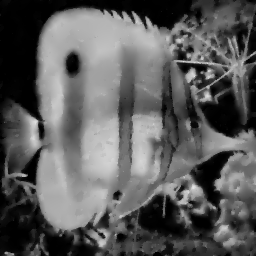}
    \caption{$\TGV^2$}
    \end{subfigure}
    \begin{subfigure}[t]{0.15\textwidth}
    \centering
    \includegraphics[width=1\textwidth,trim=0px 128px 128px 0px, clip=true]{{./images_256/noise10/fish/u_TGV_PDHG_SSIM0.73727_PSNR25.8609_cputime112.01}.png}
    \caption{$\TGV^2$ (zoom)}
    \label{fig: fish TGV}
    \end{subfigure}
    \\
    \begin{subfigure}[t]{0.15\textwidth}
    \centering
    \includegraphics[width=1\textwidth]{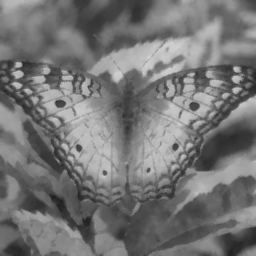}
    \caption{$\TVpwL$\\(GT)}
    \label{fig: butterfly TVpwL GT}
    \end{subfigure}
    \begin{subfigure}[t]{0.15\textwidth}
    \centering
    \includegraphics[width=1\textwidth,trim=0px 128px 128px 0px, clip=true]{{./images_256/noise10/butterfly/u_TVpwL_PDHG_GT_SSIM0.88752_PSNR29.462_cputime11.02}.png}
    \caption{$\TVpwL$\\(GT, zoom)}
    \end{subfigure}
    \begin{subfigure}[t]{0.15\textwidth}
    \centering
    \includegraphics[width=1\textwidth]{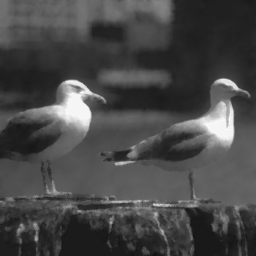}
    \caption{$\TVpwL$ (GT)}
    \end{subfigure}
    \begin{subfigure}[t]{0.15\textwidth}
    \centering
    \includegraphics[width=1\textwidth,trim=0px 128px 128px 0px, clip=true]{{./images_256/noise10/gull/u_TVpwL_PDHG_GT_SSIM0.92137_PSNR31.1986_cputime35.37}.png}
    \caption{$\TVpwL$\\(GT, zoom)}
    \end{subfigure}
    \begin{subfigure}[t]{0.15\textwidth}
    \centering
    \includegraphics[width=1\textwidth]{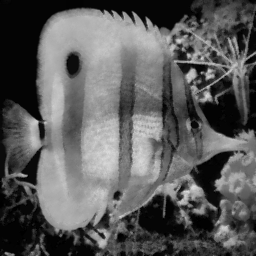}
    \caption{$\TVpwL$ (GT)}
    \end{subfigure}
    \begin{subfigure}[t]{0.15\textwidth}
    \centering
    \includegraphics[width=1\textwidth,trim=0px 128px 128px 0px, clip=true]{{./images_256/noise10/fish/u_TVpwL_PDHG_GT_SSIM0.76344_PSNR26.8469_cputime69.49}.png}
    \caption{$\TVpwL$\\(GT, zoom)}
    \label{fig: fish TVpwL GT}
    \end{subfigure}
    \caption{The \texttt{butterfly}, \texttt{gull} and the \texttt{fish} images corrupted with 10\% of Gaussian noise and denoised using $\TV$ (second row), $\TGV$ (forth row) and $\TVpwL^\gamma$ with different $\gamma$ (third and fourth rows). $\TV$ produces characteristic staircasing, which is no longer present in the much smoother $\TGV$ reconstructions. $\TVpwL^\gamma$ with $\gamma$ estimated from the noisy image is somewhere between $\TV$ and $\TGV$: there is no staircasing, but the images are not as smooth as $\TGV$. With $\gamma$ estimated from the ground truth, $\TVpwL$ produces almost perfect reconstructions. We include these images to demonstrate what performance $\TVpwL$ can theoretically achieve if supplied with a good parameter $\gamma$. We also emphasise that $\gamma$ only contains information about the magnitude of the gradient, not its direction.}
    \label{fig: gallery 10 noise}
\end{figure}

\begin{figure}[tbhp]
    \centering
    \begin{subfigure}[t]{0.15\textwidth}
    \centering
    \includegraphics[width=1\textwidth]{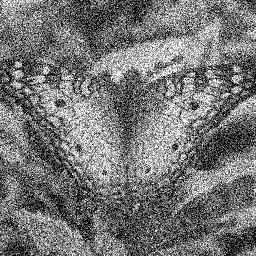}
    \caption{Noisy $f$}
    \end{subfigure}
    \begin{subfigure}[t]{0.15\textwidth}
    \centering
    \includegraphics[width=1\textwidth,trim=0px 128px 128px 0px, clip=true]{{./images_256/noise20/butterfly/u_noise}.png}
    \caption{Noisy $f$ (zoom)}
    \end{subfigure}
    \begin{subfigure}[t]{0.15\textwidth}
    \centering
    \includegraphics[width=1\textwidth]{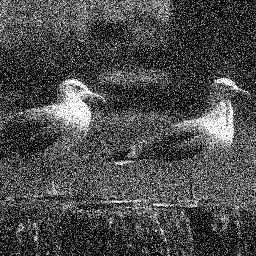}
    \caption{Noisy $f$}
    \end{subfigure}
    \begin{subfigure}[t]{0.15\textwidth}
    \centering
    \includegraphics[width=1\textwidth,trim=0px 128px 128px 0px, clip=true]{{./images_256/noise20/gull/u_noise}.png}
    \caption{Noisy $f$ (zoom)}
    \end{subfigure}
    \begin{subfigure}[t]{0.15\textwidth}
    \centering
    \includegraphics[width=1\textwidth]{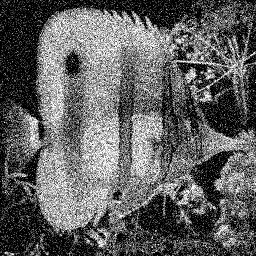}
    \caption{Noisy $f$}
    \end{subfigure}
    \begin{subfigure}[t]{0.15\textwidth}
    \centering
    \includegraphics[width=1\textwidth,trim=0px 128px 128px 0px, clip=true]{{./images_256/noise10/fish/u_noise}.png}
    \caption{Noisy $f$ (zoom)}
    \end{subfigure}
    \\
    \begin{subfigure}[t]{0.15\textwidth}
    \centering
    \includegraphics[width=1\textwidth]{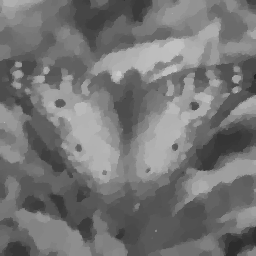}
    \caption{$\TV$}
    \end{subfigure}
    \begin{subfigure}[t]{0.15\textwidth}
    \centering
    \includegraphics[width=1\textwidth,trim=0px 128px 128px 0px, clip=true]{{./images_256/noise20/butterfly/u_TV_PDHG_SSIM0.64412_PSNR23.805_cputime16.99}.png}
    \caption{$\TV$  (zoom)}
    \end{subfigure}
    \begin{subfigure}[t]{0.15\textwidth}
    \centering
    \includegraphics[width=1\textwidth]{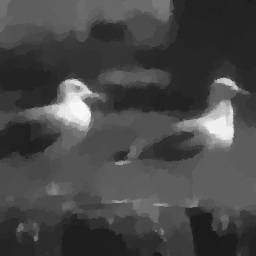}
    \caption{$\TV$}
    \end{subfigure}
    \begin{subfigure}[t]{0.15\textwidth}
    \centering
    \includegraphics[width=1\textwidth,trim=0px 128px 128px 0px, clip=true]{{./images_256/noise20/gull/u_TV_PDHG_SSIM0.77671_PSNR26.1191_cputime16.8}.png}
    \caption{$\TV$  (zoom)}
    \end{subfigure}
    \begin{subfigure}[t]{0.15\textwidth}
    \centering
    \includegraphics[width=1\textwidth]{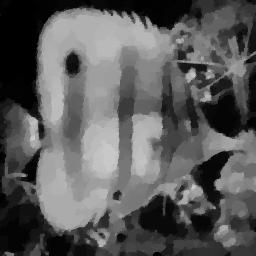}
    \caption{$\TV$}
    \end{subfigure}
    \begin{subfigure}[t]{0.15\textwidth}
    \centering
    \includegraphics[width=1\textwidth,trim=0px 128px 128px 0px, clip=true]{{./images_256/noise20/fish/u_TV_PDHG_SSIM0.58593_PSNR22.4736_cputime16.9}.png}
    \caption{$\TV$  (zoom)}
    \end{subfigure}
    \\
    \begin{subfigure}[t]{0.15\textwidth}
    \centering
    \includegraphics[width=1\textwidth]{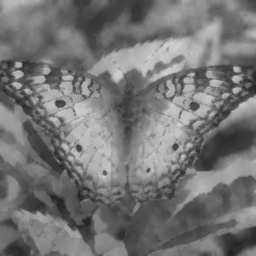}
    \caption{$\TVpwL$\\(GT)}
    \end{subfigure}
    \begin{subfigure}[t]{0.15\textwidth}
    \centering
    \includegraphics[width=1\textwidth,trim=0px 128px 128px 0px, clip=true]{{./images_256/noise20/butterfly/u_TVpwL_PDHG_GT_SSIM0.82569_PSNR27.008_cputime17.07}.png}
    \caption{$\TVpwL$\\(GT, zoom)}
    \end{subfigure}
    \begin{subfigure}[t]{0.15\textwidth}
    \centering
    \includegraphics[width=1\textwidth]{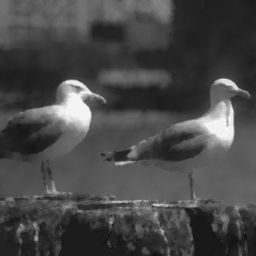}
    \caption{$\TVpwL$ (GT)}
    \end{subfigure}
    \begin{subfigure}[t]{0.15\textwidth}
    \centering
    \includegraphics[width=1\textwidth,trim=0px 128px 128px 0px, clip=true]{{./images_256/noise20/gull/u_TVpwL_PDHG_GT_SSIM0.88377_PSNR29.1533_cputime33.91}.png}
    \caption{$\TVpwL$\\(GT, zoom)}
    \end{subfigure}
    \begin{subfigure}[t]{0.15\textwidth}
    \centering
    \includegraphics[width=1\textwidth]{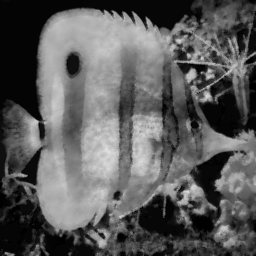}
    \caption{$\TVpwL$ (GT)}
    \end{subfigure}
    \begin{subfigure}[t]{0.15\textwidth}
    \centering
    \includegraphics[width=1\textwidth,trim=0px 128px 128px 0px, clip=true]{{./images_256/noise20/fish/u_TVpwL_PDHG_GT_SSIM0.68699_PSNR24.8756_cputime73.11}.png}
    \caption{$\TVpwL$\\(GT, zoom)}
    \end{subfigure}
    \\
    \begin{subfigure}[t]{0.15\textwidth}
    \centering
    \includegraphics[width=1\textwidth]{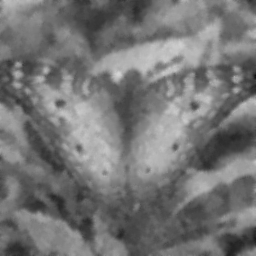}
    \caption{$\TVpwL$ (over-TV)}
    \end{subfigure}
    \begin{subfigure}[t]{0.15\textwidth}
    \centering
    \includegraphics[width=1\textwidth,trim=0px 128px 128px 0px, clip=true]{{./images_256/noise20/butterfly/u_TVpwL_PDHG_over_TV_SSIM0.67273_PSNR24.0509_cputime38.13}.png}
    \caption{$\TVpwL$\\(over-TV, zoom)}
    \end{subfigure}
    \begin{subfigure}[t]{0.15\textwidth}
    \centering
    \includegraphics[width=1\textwidth]{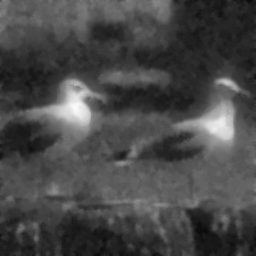}
    \caption{$\TVpwL$\\(over-TV)}
    \end{subfigure}
    \begin{subfigure}[t]{0.15\textwidth}
    \centering
    \includegraphics[width=1\textwidth,trim=0px 128px 128px 0px, clip=true]{{./images_256/noise20/gull/u_TVpwL_PDHG_over_TV_SSIM0.73533_PSNR24.7523_cputime70.19}.png}
    \caption{$\TVpwL$\\(over-TV, zoom)}
    \end{subfigure}
    \begin{subfigure}[t]{0.15\textwidth}
    \centering
    \includegraphics[width=1\textwidth]{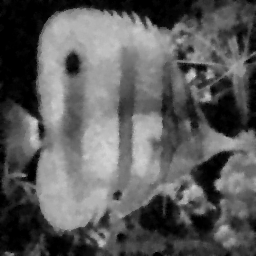}
    \caption{$\TVpwL$\\(over-TV)}
    \end{subfigure}
    \begin{subfigure}[t]{0.15\textwidth}
    \centering
    \includegraphics[width=1\textwidth,trim=0px 128px 128px 0px, clip=true]{{./images_256/noise20/fish/u_TVpwL_PDHG_over_TV_SSIM0.57187_PSNR22.3624_cputime32.75}.png}
    \caption{$\TVpwL$\\(over-TV, zoom)}
    \end{subfigure}
    \\
    \begin{subfigure}[t]{0.15\textwidth}
    \centering
    \includegraphics[width=1\textwidth]{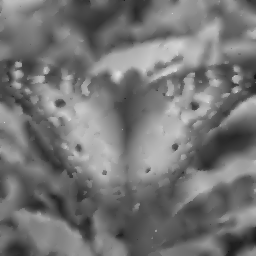}
    \caption{$\TGV^2$}
    \end{subfigure}
    \begin{subfigure}[t]{0.15\textwidth}
    \centering
    \includegraphics[width=1\textwidth,trim=0px 128px 128px 0px, clip=true]{{./images_256/noise20/butterfly/u_TGV_PDHG_SSIM0.68934_PSNR24.5645_cputime111.42}.png}
    \caption{$\TGV^2$ (zoom)}
    \end{subfigure}
    \begin{subfigure}[t]{0.15\textwidth}
    \centering
    \includegraphics[width=1\textwidth]{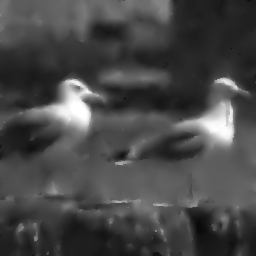}
    \caption{$\TGV^2$}
    \end{subfigure}
    \begin{subfigure}[t]{0.15\textwidth}
    \centering
    \includegraphics[width=1\textwidth,trim=0px 128px 128px 0px, clip=true]{{./images_256/noise20/gull/u_TGV_PDHG_SSIM0.80033_PSNR26.8687_cputime120.63}.png}
    \caption{$\TGV^2$ (zoom)}
    \end{subfigure}
     \begin{subfigure}[t]{0.15\textwidth}
    \centering
    \includegraphics[width=1\textwidth]{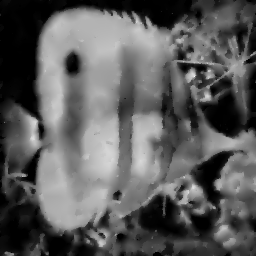}
    \caption{$\TGV^2$}
    \end{subfigure}
    \begin{subfigure}[t]{0.15\textwidth}
    \centering
    \includegraphics[width=1\textwidth,trim=0px 128px 128px 0px, clip=true]{{./images_256/noise20/fish/u_TGV_PDHG_SSIM0.59631_PSNR22.8763_cputime144.24}.png}
    \caption{$\TGV^2$ (zoom)}
    \end{subfigure}
    \caption{The \texttt{butterfly}, \texttt{gull} and the \texttt{fish} images corrupted with 20\% of Gaussian noise and denoised using $\TV$ (second row), $\TGV$ (forth row) and $\TVpwL^\gamma$ with different $\gamma$ (third and fourth rows). The results are qualitatively the same as with $10\%$ noise (Figure~\ref{fig: gallery 10 noise}). $\TV$ produces characteristic staircasing, which is no longer present in the much smoother $\TGV$ reconstructions. $\TVpwL^\gamma$ with $\gamma$ estimated from the noisy image is somewhere between $\TV$ and $\TGV$: there is no staircasing, but the images are not as smooth as $\TGV$. With $\gamma$ estimated from the ground truth, $\TVpwL$ produces almost perfect reconstructions. We include these images to demonstrate what performance $\TVpwL$ can theoretically achieve if supplied with a good parameter $\gamma$. We also emphasise that $\gamma$ only contains information about the magnitude of the gradient, not its direction.}
    \label{fig: gallery 20 noise}
\end{figure}

The results obtained with CVX demonstrate the same qualitative behaviour (Table~\ref{tab: quantitative results 10 Gaussian}). The reconstructions are almost identical to those obtained with the primal dual scheme and are not shown here.

To investigate the effect of the regularisation parameter $\lambda$ in~\eqref{eq: over-TV argmin} that controls the amount of $\TV$-over\-regularisation used to estimate $\gamma$, we perform experiments with $\lambda = 100;200;300$ and $400$ on the \texttt{butterfly} image (with $10\%$ noise). The results are shown in Figure~\ref{fig: comp_lambda}. Surprisingly, although the overregularised $\TV$ solutions differ significantly (Figures~\ref{fig: TV_lambda_100},~\ref{fig: TV_lambda_200},~\ref{fig: TV_lambda_300} and~\ref{fig: TV_lambda_400}) and there is visible difference in the estimated $\gamma$ (Figures~\ref{fig: gamma_lambda_100},~\ref{fig: gamma_lambda_200},~\ref{fig: gamma_lambda_300} and~\ref{fig: gamma_lambda_400}), the corresponding $\TVpwL$ reconstructions differ only marginally, which is also confirmed by the very similar $\SSIM$ and $\PSNR$ values (Figures~\ref{fig: TVpwl_lambda_100},~\ref{fig: TVpwl_lambda_200},~\ref{fig: TVpwl_lambda_300} and~\ref{fig: TVpwl_lambda_400}).

\begin{figure}[tbhp]
    \centering
    \begin{subfigure}[t]{0.15\textwidth}
    \centering
    \includegraphics[width=1\textwidth]{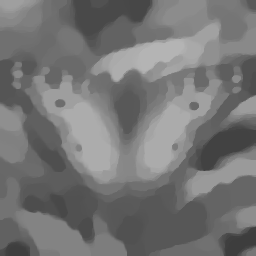}
    \caption{$\TV$, $\lambda=100$}
    \label{fig: TV_lambda_100}
    \end{subfigure}
    \begin{subfigure}[t]{0.15\textwidth}
    \centering
    \includegraphics[width=1\textwidth]{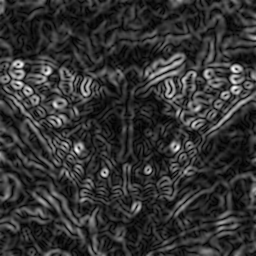}
    \caption{estimated $\gamma$\newline with $\lambda = 100$}
    \label{fig: gamma_lambda_100}
    \end{subfigure}    \begin{subfigure}[t]{0.15\textwidth}
    \centering
    \includegraphics[width=1\textwidth]{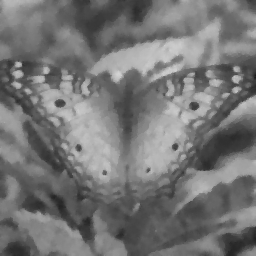}
    \caption{$\TVpwL^\gamma$, $\lambda=100$\newline $\SSIM = 0.781$, $\PSNR = 26.68$}
    \label{fig: TVpwl_lambda_100}
    \end{subfigure}
    \begin{subfigure}[t]{0.15\textwidth}
    \centering
    \includegraphics[width=1\textwidth]{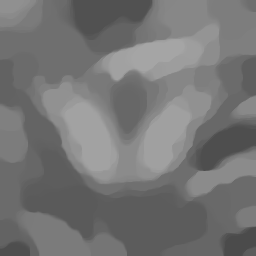}
    \caption{$\TV$, $\lambda=200$}
    \label{fig: TV_lambda_200}
    \end{subfigure}
    \begin{subfigure}[t]{0.15\textwidth}
    \centering
    \includegraphics[width=1\textwidth]{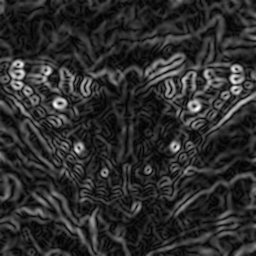}
    \caption{estimated $\gamma$\newline with $\lambda = 200$}
    \label{fig: gamma_lambda_200}
    \end{subfigure}    \begin{subfigure}[t]{0.15\textwidth}
    \centering
    \includegraphics[width=1\textwidth]{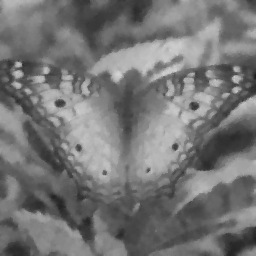}
    \caption{$\TVpwL^\gamma$, $\lambda=200$\\$\SSIM = 0.782$, $\PSNR = 26.71$}
    \label{fig: TVpwl_lambda_200}
    \end{subfigure}
    \\
    \begin{subfigure}[t]{0.15\textwidth}
    \centering
    \includegraphics[width=1\textwidth]{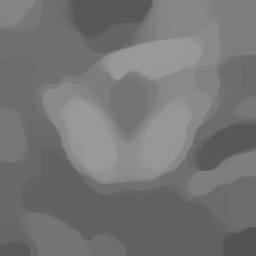}
    \caption{$\TV$, $\lambda = 300$}
    \label{fig: TV_lambda_300}
    \end{subfigure}
    \begin{subfigure}[t]{0.15\textwidth}
    \centering
    \includegraphics[width=1\textwidth]{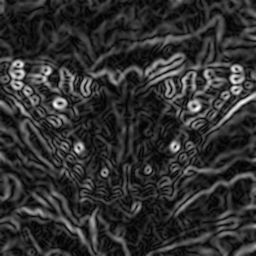}
    \caption{estimated $\gamma$\\with $\lambda = 300$}
    \label{fig: gamma_lambda_300}
    \end{subfigure}    \begin{subfigure}[t]{0.15\textwidth}
    \centering
    \includegraphics[width=1\textwidth]{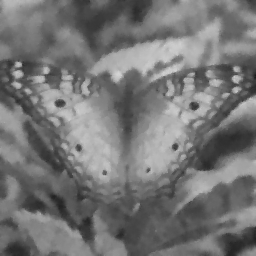}
    \caption{$\TVpwL^\gamma$, $\lambda = 300$ $\SSIM = 0.783$, $\PSNR = 26.72$}
    \label{fig: TVpwl_lambda_300}
    \end{subfigure}    
    \begin{subfigure}[t]{0.15\textwidth}
    \centering
    \includegraphics[width=1\textwidth]{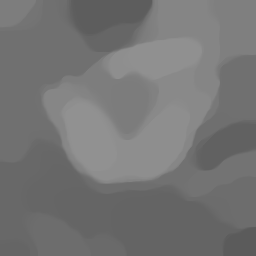}
    \caption{$\TV$, $\lambda = 400$}
    \label{fig: TV_lambda_400}
    \end{subfigure}
    \begin{subfigure}[t]{0.15\textwidth}
    \centering
    \includegraphics[width=1\textwidth]{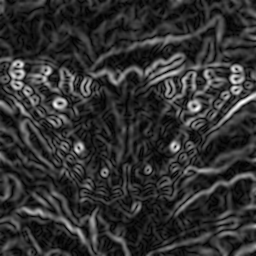}
    \caption{estimated $\gamma$\newline with $\lambda = 400$}
    \label{fig: gamma_lambda_400}
    \end{subfigure}    \begin{subfigure}[t]{0.15\textwidth}
    \centering
    \includegraphics[width=1\textwidth]{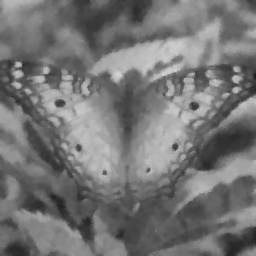}
    \caption{$\TVpwL^\gamma$, $\lambda = 400$\newline $\SSIM = 0.783$, $\PSNR = 26.73$}
    \label{fig: TVpwl_lambda_400}
    \end{subfigure} 
    \caption{Although the over-regularised $\TV$ solutions differ significantly (\subref{fig: TV_lambda_100},\subref{fig: TV_lambda_200},\subref{fig: TV_lambda_300} and \subref{fig: TV_lambda_400}) and the estimated $\gamma$ are also different (\subref{fig: gamma_lambda_100},\subref{fig: gamma_lambda_200},\subref{fig: gamma_lambda_300} and \subref{fig: gamma_lambda_400}), the corresponding $\TVpwL$ reconstructions are almost identical (\subref{fig: TVpwl_lambda_100},\subref{fig: TVpwl_lambda_200},\subref{fig: TVpwl_lambda_300} and \subref{fig: TVpwl_lambda_400}) and the $\SSIM$ and $\PSNR$ values are very similar.}
    \label{fig: comp_lambda}
\end{figure}


\begin{table}[tbhp]\small
\caption{Results for our dataset of grey-scale images in Figure \ref{fig: dataset}, corrupted with 10\% of Gaussian noise and the PDHG Algorithm \ref{alg: PDHG} (CVX results in brackets). The runtime for $\TVpwL$ is up to an order of magnitude smaller than for $\TGV$ (e.g., in \texttt{barbara, cameramen, fish, flowers}) and on the same scale as for $\TV$ (typically $1.5-2$ times larger). The reconstruction quality is similar to $\TGV$. $\TVpwL$ with $\gamma$ estimated from the ground truth consistently obtains the best results with a wide margin (although, of course, this is an idealistic situation but it shows nevertheless the potential of $\TVpwL$ with a better estimate of $\gamma$).  The $\SSIM$ and $\PSNR$ measures do not always reflect the visual results in Figure~\ref{fig: gallery 10 noise}; for instance, $\TV$ sometimes obtains similar values of $\SSIM$ as $\TGV$ and $\TVpwL$ despite visible staircasing (e.g., in \texttt{fish}).}
\label{tab: quantitative results 10 Gaussian}
\begin{tabularx}{\textwidth}{X|c|c|cc|c}
\toprule
\multirow{2}{*}{Image} & \multirow{2}{*}{Index} & \multirow{2}{*}{$\TV$} &  \multicolumn{2}{c|}{$\TVpwL$} & \multirow{2}{*}{$\TGV^2$} \\
 &  &  & (GT) & (over-$\TV$) & \\
\midrule
                   & SSIM          & 0.779 (0.779) & 0.860 (0.853)  & 0.782 (0.782)  & 0.800 (0.800)\\
\texttt{barbara}   & PSNR          & 27.01 (27.01) & 29.26 (28.57)  & 27.05 (27.04)  & 27.79 (27.79)\\
                   & cputime (s.)  & 09.49 (95.25) & 13.76 (167.13) & 17.02 (161.93) & 104.01 (199.27)\\
\midrule
                   & SSIM          & 0.581 (0.582) & 0.742 (0.706)  & 0.575 (0.574)  & 0.593 (0.590)\\
\texttt{brickwall} & PSNR          & 25.49 (25.50) & 27.09 (26.76)  & 25.44 (25.44)  & 25.57 (25.58)\\
                   & cputime (s.)  & 05.72 (94.57) & 11.12 (161.00) & 13.00 (163.85) & 69.91 (196.94)\\
\midrule
                   & SSIM          & 0.765 (0.765) & 0.888 (0.869)  & 0.783 (0.783)  & 0.802 (0.801)\\
\texttt{butterfly} & PSNR          & 26.55 (26.55) & 29.46 (28.50)  & 26.73 (26.73)  & 27.36 (27.35)\\
                   & cputime (s.)  & 05.90 (97.97) & 11.02 (162.52) & 16.49 (164.67) & 82.48 (205.17)\\
\midrule
                   & SSIM          & 0.805 (0.805) & 0.845 (0.845)  & 0.788 (0.788)  & 0.802 (0.801)\\
\texttt{cameraman} & PSNR          & 27.32 (27.33) & 27.29 (27.28)  & 26.78 (26.77)  & 27.32 (27.32)\\
                   & cputime (s.)  & 07.57 (95.45) & 22.45 (164.38) & 13.81 (160.92) & 108.16 (197.95)\\
\midrule
                   & SSIM          & 0.729 (0.731) & 0.763 (0.749)  & 0.721 (0.712)  & 0.737 (0.751)\\
\texttt{fish}      & PSNR          & 25.50 (25.51) & 26.85 (26.67)  & 25.41 (25.43)  & 25.86 (25.89)\\
                   & cputime (s.)  & 07.85 (96.12) & 69.49 (173.02) & 14.69 (163.87) & 112.01 (204.94)\\
\midrule
                   & SSIM          & 0.787 (0.787) & 0.844 (0.844)  & 0.786 (0.786)  & 0.792 (0.792)\\
\texttt{flowers  } & PSNR          & 22.18 (22.18) & 22.93 (22.93)  & 22.14 (22.14)  & 22.26 (22.26)\\
                   & cputime (s.)  & 06.12 (94.72) & 23.59 (161.68) & 12.36 (159.55) & 129.16 (201.21)\\
\midrule
                   & SSIM          & 0.847 (0.847) & 0.921 (0.916)  & 0.839 (0.839) & 0.868 (0.868)\\
\texttt{gull}      & PSNR          & 28.99 (28.99) & 31.20 (30.59)  & 28.66 (28.66) & 29.80 (29.79)\\
                   & cputime (s.)  & 11.49 (98.86) & 35.37 (169.48) & 17.17 (172.26) & 87.96 (201.47)\\
\midrule
                   & SSIM          & 0.649 (0.649) & 0.744 (0.734)  & 0.655 (0.655)  & 0.658 (0.659)\\
\texttt{house}     & PSNR          & 26.11 (26.11) & 27.07 (26.88)  & 26.04 (26.04)  & 26.19 (26.19)\\
                   & cputime (s.)  & 06.27 (95.30) & 13.01 (164.21) & 13.07 (160.93) & 82.55 (201.01)\\
\midrule
                   & SSIM          & 0.667 (0.667) & 0.808 (0.772) & 0.681 (0.681) & 0.688 (0.687)\\
\texttt{owl}       & PSNR          & 25.66 (25.66) & 27.80 (26.91) & 25.81 (25.80) & 26.03 (26.02)\\
                   & cputime (s.)  & 05.27 (98.60) & 07.06 (164.18) & 10.18 (164.95) & 87.76 (208.66)\\
\midrule
                    & SSIM         & 0.792 (0.792) & 0.864 (0.864) & 0.792 (0.797) & 0.811 (0.820)\\
\texttt{pine\_tree} & PSNR         & 25.88 (25.89) & 26.94 (26.93) & 25.83 (25.83) & 26.38 (26.41)\\
                    & cputime (s.) & 07.46 (94.86) & 27.58 (164.70) & 15.21 (163.82) & 102.93 (202.43)\\
\midrule
                    & SSIM         & 0.713 (0.713) & 0.820 (0.808) & 0.730 (0.730) & 0.745 (0.744)\\
\texttt{squirrel}   & PSNR         & 27.23 (27.22) & 28.96 (28.41) & 27.45 (27.45) & 27.98 (27.96)\\
                    & cputime (s.) & 08.06 (95.04) & 16.85 (167.99) & 15.08 (162.05) & 79.45 (198.86)\\
\bottomrule
\end{tabularx}
\end{table}

\begin{table}[tbhp]\small
\caption{Results for our dataset of grey-scale images in Figure \ref{fig: dataset}, corrupted with 20\% of Gaussian noise and the PDHG Algorithm \ref{alg: PDHG} (CVX results in brackets). The results are qualitatively the same as for $10\%$ noise (Table~\ref{tab: quantitative results 10 Gaussian}). The runtime for $\TVpwL$ is still significantly smaller than for $\TGV$ (e.g., in \texttt{cameraman, fish, flowers}) and on the same scale as for $\TV$ (typically $2-2.5$ times larger). The reconstruction quality is similar to $\TGV$ and in a few cases even slightly better (\texttt{brickwall, owl}). $\TVpwL$ with $\gamma$ estimated from the ground truth consistently obtains the best results with a wide margin (although, of course, this is an idealistic situation but it shows nevertheless the potential of $\TVpwL$ with a better estimate of $\gamma$). The $\SSIM$ and $\PSNR$ measures do not always reflect the visual results in Figure~\ref{fig: gallery 20 noise}; for instance, $\TV$ sometimes obtains similar results as $\TGV$ and $\TVpwL$ despite visible staircasing (e.g., in \texttt{fish}).}
\label{tab: quantitative results 20 Gaussian}
\begin{tabularx}{\textwidth}{X|c|c|cc|c}
\toprule
\multirow{2}{*}{Image} & \multirow{2}{*}{Index} & \multirow{2}{*}{$\TV$} &  \multicolumn{2}{c|}{$\TVpwL$} & \multirow{2}{*}{$\TGV^2$} \\
 &  &  & (GT) & (over-$\TV$) & \\
\midrule
                   & SSIM          & 0.679 (0.679) & 0.809 (0.788)  & 0.681 (0.681)  & 0.704 (0.703)\\
\texttt{barbara}   & PSNR          & 24.05 (24.04) & 27.06 (25.44)  & 24.13 (24.12)  & 24.99 (24.98)\\
                   & cputime (s.)  & 17.46 (95.18) & 18.99 (161.55) & 42.77 (165.00) & 127.65 (199.06)\\
\midrule
                   & SSIM          & 0.373 (0.375) & 0.614 (0.548)  & 0.395 (0.395)  & 0.383 (0.388)\\
\texttt{brickwall} & PSNR          & 23.48 (23.48) & 24.98 (23.92)  & 23.59 (25.59)  & 23.48 (23.49)\\
                   & cputime (s.)  & 11.91 (94.57) & 19.31 (160.73) & 25.65 (163.06) & 105.44 (211.26)\\
\midrule
                   & SSIM          & 0.644 (0.644) & 0.826 (0.783)  & 0.673 (0.673)  & 0.689 (0.688)\\
\texttt{butterfly} & PSNR          & 23.81 (23.80) & 27.00 (25.14)  & 24.05 (24.04)  & 24.56 (24.55)\\
                   & cputime (s.)  & 16.99 (94.52) & 17.07 (161.55) & 38.13 (168.36) & 111.42 (201.35)\\
\midrule
                   & SSIM          & 0.731 (0.731) & 0.789 (0.795)  & 0.666 (0.667)   & 0.713 (0.714)\\
\texttt{cameraman} & PSNR          & 24.29 (24.30) & 25.30 (25.17)  & 23.26 (23.26)   & 24.15 (24.17)\\
                   & cputime (s.)  & 13.47 (95.98) & 32.06 (169.86) & 31.16 (163.18) & 137.99 (202.67)\\
\midrule
                   & SSIM          & 0.586 (0.588) & 0.687 (0.638)  & 0.572 (0.563)  & 0.596 (0.622)\\
\texttt{fish}      & PSNR          & 22.47 (22.48) & 24.88 (23.70)  & 22.36 (22.37)  & 22.88 (22.92)\\
                   & cputime (s.)  & 16.90 (97.83) & 73.11 (162.81) & 32.75 (163.12) & 144.24 (202.12)\\
\midrule
                   & SSIM          & 0.585 (0.585) & 0.756 (0.698)  & 0.592 (0.592)  & 0.596 (0.596)\\
\texttt{flowers  } & PSNR          & 18.99 (18.99) & 20.65 (20.07)  & 19.00 (19.00)  & 19.08 (19.08)\\
                   & cputime (s.)  & 13.19 (96.63) & 17.00 (162.70) & 26.96 (159.65) & 153.47 (198.14)\\
\midrule
                   & SSIM          & 0.777 (0.777) & 0.884 (0.872)  & 0.735 (0.736)  & 0.800 (0.799)\\
\texttt{gull}      & PSNR          & 26.12 (26.12) & 29.15 (27.45)  & 24.75 (24.74)  & 26.87 (26.85)\\
                   & cputime (s.)  & 16.80 (95.79) & 33.91 (163.39) & 70.19 (170.17) & 120.63 (216.58)\\
\midrule
                   & SSIM          & 0.527 (0.527) & 0.649 (0.626)  & 0.533 (0.533)  & 0.536 (0.537)\\
\texttt{house}     & PSNR          & 23.80 (23.80) & 25.10 (24.23)  & 23.60 (23.60)  & 23.88 (23.88)\\
                   & cputime (s.)  & 15.31 (94.71) & 17.35 (161.57) & 32.02 (163.97) & 113.33 (204.15)\\
\midrule
                   & SSIM          & 0.515 (0.515) & 0.705 (0.648)  & 0.546 (0.546)  & 0.544 (0.544)\\
\texttt{owl}       & PSNR          & 23.14 (23.14) & 25.33 (23.64)  & 23.36 (23.35)  & 23.64 (23.63)\\
                   & cputime (s.)  & 15.63 (95.01) & 16.75 (159.07) & 34.68 (163.69) & 114.92 (207.76)\\
\midrule
                    & SSIM         & 0.673 (0.673) & 0.806 (0.765)  & 0.656 (0.670)  & 0.683 (0.707)\\
\texttt{pine\_tree} & PSNR         & 23.22 (23.22) & 25.10 (24.25)  & 23.08 (23.08)  & 23.65 (23.69)\\
                    & cputime (s.) & 15.65 (95.19) & 24.55 (163.32) & 35.76 (161.17) & 143.66 (201.69)\\
\midrule
                    & SSIM         & 0.626 (0.626) & 0.750 (0.733)  & 0.643 (0.643)  & 0.668 (0.667)\\
\texttt{squirrel}   & PSNR         & 24.74 (24.73) & 26.89 (25.54)  & 24.90 (24.89)  & 25.84 (25.83)\\
                    & cputime (s.) & 17.79 (98.50) & 22.96 (164.53) & 48.56 (167.29) & 111.22 (203.91)\\
\bottomrule
\end{tabularx}
\end{table}

\section{Conclusion}
In this paper we have analysed a first order $\TV$ type regulariser that contains in its kernel all functions with a given (possibly, space dependant) Lipschitz constant and therefore only penalises gradients above a certain predefined threshold. From the theoretical point of view, its properties are similar to those of Total Variation (e.g., both obey a maximum principle). From the numerical point of view, their performance is different; the proposed regulariser significantly reduces staircasing while requiring roughly the same computational time as Total Variation. Compared with  Total Generalised Variation, which is a second order regulariser, the proposed regulariser can be up to an order of magnitude faster.

The performance of the proposed regulariser significantly depends on the suitability of the spatially varying Lipschitz constant $\gamma$ that defines the amount of variation allowed in the reconstruction without any penalty. If a good estimate is available, the results can be much better than with other regularisers. 

Ways of finding a good $\gamma$, however, are beyond the scope of this paper, where we rather concentrate on theoretical properties and efficient numerical methods in the case when $\gamma$ is given. 
We mention, however, that one possible way of estimating $\gamma$ from a noisy image is using a cartoon-texture decomposition such as in \cite{BuadesLeMorelVese2011,LeGuen2014,BuadesLisani2016}.

\paragraph{Acknowledgements.}
This work has been supported by the European Union’s Horizon 2020 research and
innovation programme under the Marie Sklodowska-Curie grant agreement No. 777826
(NoMADS). 
YK is supported by the Royal Society (Newton International Fellowship NF170045 Quantifying Uncertainty in Model-Based Data Inference Using Partial Order) and the Cantab Capital Institute for the Mathematics of Information. 
SP and CBS acknowledge support from the Leverhulme Trust project on Unveiling the Invisible: Mathematics for conservation in Arts and Humanities. 
CBS also acknowledges support from the Philip Leverhulme Prize, the EPSRC grants No.\ EP/T003553/1, EP/S026045/1 and EP/P020259/1, the EPSRC Centre No.\ EP/N014588/1, the European Union Horizon 2020 research
and innovation programmes under the Marie Sk{\l}odowska-Curie grant agreement No.\ 691070 CHiPS, the Cantab Capital Institute for the Mathematics of Information and the Alan Turing Institute.  We also acknowledge the support of NVIDIA Corporation with the donation of a Quadro P6000 and a Titan Xp GPUs.

\printbibliography

\end{document}